\newcommand{\subjclass}[2][2020]{%
  \let\@oldtitle\@title%
  \gdef\@title{\@oldtitle\footnotetext{#1 \emph{Mathematics subject classification}: #2}}%
}
\newcommand{\keywords}[1]{%
  \let\@@oldtitle\@title%
  \gdef\@title{\@@oldtitle\footnotetext{\emph{Keywords}: #1}}%
}
\newtheorem{defn}{Definition}[section]
\newtheorem{thm}[defn]{Theorem}
\newtheorem{lem}[defn]{Lemma}
\newtheorem{prop}[defn]{Proposition}
\newtheorem{cor}[defn]{Corollary}
\newtheorem{ex}[defn]{Example}
\newtheorem{re}[defn]{Remark}
\newcommand{\bdefn}{\begin{defn}}
\newcommand{\edefn}{\end{defn}}
\newcommand{\bthm}{\begin{thm}}
\newcommand{\ethm}{\end{thm}}
\newcommand{\blem}{\begin{lem}}
\newcommand{\elem}{\end{lem}}
\newcommand{\bprop}{\begin{prop}}
\newcommand{\eprop}{\end{prop}}
\newcommand{\bcor}{\begin{cor}}
\newcommand{\ecor}{\end{cor}}
\newcommand{\beg}{\begin{eg}}
\newcommand{\eeg}{\end{eg}}
\newcommand{\bre}{\begin{re}}
\newcommand{\ere}{\end{re}}
\newcommand{\bpf}{\begin{proof}}
\newcommand{\epf}{\end{proof}}
\newcommand{\benu}{\begin{enumerate}}
\newcommand{\eenu}{\end{enumerate}}
\newcommand{\bc}{\begin{center}}
\newcommand{\ec}{\end{center}}
\newcommand{\bea}{\begin{eqnarray}}
\newcommand{\eea}{\end{eqnarray}}
\newcommand{\ba}{\begin{align*}}
\newcommand{\ea}{\end{align*}}
\newcommand{\Bea}{\begin{eqnarray*}}
\newcommand{\Eea}{\end{eqnarray*}}
\newcommand{\beq}{\begin{equation}}
\newcommand{\eeq}{\end{equation}}
\newcommand{\Beq}{\begin{equation*}}
\newcommand{\Eeq}{\end{equation*}}
\newcommand{\bspl}{\begin{split}}
\newcommand{\espl}{\end{split}}
\numberwithin{equation}{section}
\begin{document}
\date{}
\title{\bf  Yang-Baxter equations and $\mathcal O$-operators of a Hom-Jordan superalgebra with representation}
\author{ S. Mabrouk,  O. Ncib and  S. Sendi}
\author{\normalsize \bf Sami Mabrouk \small{$^{1}$} \footnote{  E-mail: mabrouksami00@yahoo.fr}, Othmen Ncib\small{$^{1}$} \footnote{  E-mail: othmenncib@yahoo.fr (Corresponding author)} and Sihem Sendi \small{$^{2}$} \footnote{  E-mail: sihemsendi995@gmail.com }}
\date{{\small{$^{1}$}   Faculty of Sciences, University of Gafsa,   BP
2100, Gafsa, Tunisia \\  \small{$^{2}$}    Faculty of Sciences, University of Sfax,   BP 1171, 3000 Sfax, Tunisia}}

\maketitle
\begin{abstract}
In this paper, first we recall the notion of  Hom-Jordan superalgebras and study their representations. We define the
 Yang-Baxter equation in a Hom-Jordan superalgebra. Additionally, we  extend the connections between  
 $\mathcal {O}$-operators  and  skew-symmetric solutions
  Yang-Baxter equation  of Hom-Jordan superalgebras (HJYBE). In which,  we prove that a super skew-symmetric solution of  HJYBE
   can be interpreted as an $\mathcal{O}$-operator associated to the coadjoint representation.
 Finally, we study the relationship between Hom-pre-Jordan superalgebras and Hom-Jordan superalgebras via an $\mathcal{O}$-operators. Some other related results are considered.

\end{abstract}

{\bf Keywords:} Hom-jordan superalgebra, Hom-altenative superalgebra,  Yang-Baxter equation, $\mathcal{O}$-operator, Hom-pre Jordan superalgebra.

\textbf{Mathematics Subject Classification}: 17B61, 17D15, 17D30, 17B61, 17B38, 16T25. 
\tableofcontents
\section{Introduction}
Jordan superalgebras were first studied in \cite{VG} by classifying finite-dimensional simple
Jordan superalgebras over an algebraically closed field of characteristic zeros. These algebraic structures have been rapidly developed \cite{VG,RZ}. A Jordan superalgebra is a $\mathbb Z_2$-graded vector space $\mathcal{J}$ equipped with an even super-commutative bilinear map $'\cdot'$ (i.e. $x\cdot y =(-1)^{|x||y|} y \cdot x$) that satisfies the Jordan super-identity:
\begin{align*}
  \sum_{x,y,t}(-1)^{|u|(|x|+|z|)}as(x\cdot y,z,u)=0.
\end{align*}
where $x,y,u\in \mathcal{J}$ and $\displaystyle\sum_{x,y,u}$ denoted the cyclic sum over $(x,y,z)$ and $as(x,y,z)=(x\cdot y)\cdot z-x\cdot(y\cdot z)$ for any $x,y,z\in \mathcal{J}$. The reader is referred to \cite{V} and  \cite{shest-jordan-super} for discussions about the important role of Jordan superalgebras in physics, especially in quantum
mechanics.


Yang-Baxter system plays a crucial role in many fields like
integrable systems, quantum groups, quantum field, and so
on and has become an important topic in both mathematics
and mathematical physics since $1980$s. Classical Yang–Baxter equation (CYBE) has a profound connection with many branches of mathematical physics and pure mathematics (\cite{Belavin-Drinfel,V-Drinfel}). In particular, CYBE can
be regarded as a “classical limit” of quantum Yang–Baxter equation (\cite{Belavin1}), which
plays an important role in the study of classical integrable system (\cite{Gelfand-Dorfman,I-V-Cherednik,M-A-Semenov}). The notion of Jordan $D$-bialgebras was introduced by Zhelyabin \cite{zv1} as an
analogue of Lie bialgebras. A kind of Jordan $D$-bialgebras (coboundary cases)
is obtained from the solutions Jordan Yang-Baxter equation (JYBE), which is an
analogue of the classical Yang-Baxter equation (CYBE) in Lie algebra \cite{zv2}. The original form of a JYBE is given in the tensor form, so it is natural to
consider operator form of the JYBE which satisfies certain conditions \cite{DNC}. It
was proved that a skew-symmetric solution of the JYBE was exactly a special
operator, which was called $\mathcal{O}$-operator. Indeed,
the notion of pre-Jordan superalgebra as a Jordan algebraic analogue of a pre-Lie superalgebra was
introduced in \cite{wang}. 

The notion of Hom-associative algebra were introduced in \cite{ms} is a generalization of associative algebras, it corresponds to the notion of Hom-Lie algebra which appeared in physics, in the sense that the commutator
of a Hom-associative algebra gives a Hom-Lie algebra.
It was originally introduced by Makhlouf and Silvestrov in \cite{ms4}. Hom-Lie algebras and more general quasi-Hom-Lie algebras were introduced first
by Hartwig, Larsson and Silvestrov in \cite{hls} where a general approach to discretization
of Lie algebras of vector fields using general twisted derivations ($\sigma$-derivations) and a
general method for construction of deformations of Witt and Virasoro type algebras
based on twisted derivations have been developed. In \cite{KFA} the authors introduce Hom-alternative, Hom-Malcev and Hom-Jordan superalgebras which are generalizations of Hom-alternative, Hom-Malcev and Hom-Jordan algebras respectively studies in \cite{Makhlouf1, Yau}. The notion of Hom-pre-alternative, Hom-pre-Malcev and Hom-pre-Jordan algebras are introduced in \cite{Chtioui1, Harrathi, Chtioui2}.

The purpose of this paper is to study Hom-type generalizations of  Jordan superalgebras which are endowed with an even nondegenerate supersymmetric associative bilinear form are called pseudo-Euclidean. Super JYBE in Hom-Jordan superalgebras and $\mathcal{O}$-operators are
introduced. Moreover, we exploit Hom-pre-Jordan superalgebras which are the algebraic structures behind the $\mathcal{O}$-operators and study their relations with
Hom-Jordan superalgebras and Hom-dendriform superalgebras.

This paper is organized as follows. In Section \ref{Sec2}, we give some fundamental
results on Hom-Jordan superalgebras and we introduce the notion of Pseudo-Euclidean Hom-Jordan superalgebras. In Section \ref{Sec3}, we introduce
the notion of $\mathcal{O}$-operators of Hom-Jordan superalgebras, and then constuct
a direct relation between $\mathcal{O}$-operator and super-HJYBE with respect to the coadjoint representation. In Section \ref{Sec4},
we introduce The notion of Hom-pre-Jordan superalgebras and give their relations with $\mathcal{O}$-operators, Hom-Jordan superalgebras, and Hom-dendriform superalgebras.\\

Recall that, in general a superalgebra means a $\mathbb{Z}_2$-graded algebra, that is an algebra $\mathcal{A}$ which may be written as a direct sum of subspaces $\mathcal{A}=\mathcal{A}_{0}\oplus\mathcal{A}_{1}$ subject to the relation $\mathcal{A}_{i}\mathcal{A}_{j}\subseteq \mathcal{A}_{i+j}$.
The subspaces $\mathcal{A}_{0}$ and $\mathcal{A}_{1}$ are called the even and the odd parts of the superalgebra $\mathcal{A}$. Define the $\mathbb{Z}_2$-graded vector space $\mathcal{A}^{op}=\mathcal{A}_{1}\oplus\mathcal{A}_{0}$.
Throughout this paper, $\mathbb{K}$ denotes an algebraically closed field of characteristic 0. All algebras and vector spaces are considered on $\mathbb{K}$. The parity of the homogeneous element $x$ is denoted by $|x|$.
 \section{Basics on Hom-Jordan superalgebras}\label{Sec2}
In this section, we presents fundamental concepts on Hom-Jordan superalgebras introduced in \cite{KFA}  and develop some helpful results that we will use later.
\begin{defn}\cite{KFA}
  A Hom-Jordan superalgebra is a triple $(\mathcal{J}, \cdot, \alpha)$
    consisting of $\mathbb{Z}_2$-graded vector space $\mathcal{J}$, an even bilinear map $\cdot: \mathcal{J}\times \mathcal{J}
    \rightarrow \mathcal{J}$ which is super-commutative (i.e  $x\cdot y=(-1)^{|x||y|}y\cdot x$)
satisfying
\begin{equation}\label{Hom-Jordan super-identity}
\sum_{x,y,u}(-1)^{|u|(|x|+|z|)}as_\alpha(x\cdot y,\alpha(z),\alpha(u))=0.
\end{equation}
where $x,y,z,u\in \mathcal{H}(\mathcal{J})$,  $\sum_{x,y,z}$ denoted the cyclic sum over $(x,y,z)$ and $as_\alpha(x,y,z)=(x\cdot y)\cdot \alpha(z)-\alpha(x)\cdot(y\cdot z)$.
A Hom-Jordan superalgebra is called multiplicative if $\alpha$ is a superalgebra morphism (i.e. for any $x,y\in\mathcal{J}$, we have $\alpha(x\cdot y)=\alpha(x)\cdot\alpha(y)$ and called regular if $\alpha$ is a superalgebra automorphism. Throughout this work, all Hom-algebras are considered multiplicative.
\end{defn}

The Hom-jordan super-identity \eqref{Hom-Jordan super-identity} can be written as
\begin{equation}
\sum_{x,y,u}(-1)^{|u|(|x|+|z|)}(((x\cdot y)\cdot\alpha(z))\cdot \alpha^{2}(u))-\alpha(x\cdot y)\cdot(\alpha(z)\cdot\alpha(u)))=0
\end{equation}
A Hom-associative superalgebra is just a Hom-associative algebra with a $\mathbb{Z}_{2}$-graduation but a Hom-jordan superalgebra is not necessarily a Hom-Jordan algebra.
\begin{ex}\label{ex-Jord-sup}
Let $(\mathcal J,\cdot)$ be a Jordan superalgebra and $\alpha:\mathcal J\to \mathcal J$ be an algebra morphism. Then $(\mathcal J,\cdot_\alpha,\alpha)$ is a Hom-Jordan superalgebra, where $x\cdot_\alpha y=\alpha(x)\cdot\alpha (y).$       
\end{ex}
\begin{ex}\cite{KFA}\label{ex-hom-jor}
 We consider the $3$-dimensional Kaplansky superalgebra $K_{3}=\langle\ e\ | \ x,\ y\ \rangle$. The   product is defined as:
$$e\cdot e=e,~e\cdot x=\frac{1}{2}x,~e\cdot y=\frac{1}{2}y,~x\cdot y=e.$$
$K_{3}$ is a simple Jordan superalgebra.\\
Even superalgebra endomorphisms $\alpha$ with respect to the basis $\{e,x,y\}$ are defined by
$$\alpha(e)=e,~~~~\alpha(x)=\frac{1}{c}x,~~~~\alpha(y)=\frac{1}{c}y,$$
with $c\neq 0$.
According to  Example \ref{ex-Jord-sup}, the even linear map $\alpha$ and the following multiplication
$$e\cdot_{\alpha}e=e,~~~e\cdot_{\alpha}x=\frac{1}{2c}x,~~~e\cdot_{\alpha}y=\frac{1}{2c}y,~~~x\cdot_{\alpha}y=\frac{1}{c^2}e.$$
determine a $3$-dimensional Hom-Jordan superalgebra.    
\end{ex}
\begin{prop}\cite{KFA}\label{FromAltToJorMalcev}
Let $(\mathcal{J},\mu, \alpha)$ be a Hom-alternative superalgebra. Then $(\mathcal{J}, \ast , \alpha)$ is a Hom-Jordan superalgebra where
$$x\ast y=\mu(x,y)+(-1)^{|x||y|}\mu(y,x),\quad \forall x,y \in \mathcal{H}(\mathcal{J}).$$ 
\end{prop}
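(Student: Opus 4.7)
The plan is to verify the two conditions defining a Hom-Jordan superalgebra for $(\mathcal{J},\ast,\alpha)$: super-commutativity of $\ast$ and the Hom-Jordan super-identity \eqref{Hom-Jordan super-identity}. Super-commutativity is immediate from the definition of $\ast$: applying $(-1)^{|x||y|}$ to $y\ast x=\mu(y,x)+(-1)^{|x||y|}\mu(x,y)$ recovers $x\ast y$. The content therefore lies in the Hom-Jordan super-identity.

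The key preliminary step is to extract from the Hom-alternative axioms the super-antisymmetry of the $\mu$-associator $as^{\mu}_{\alpha}(x,y,z)=\mu(\mu(x,y),\alpha(z))-\mu(\alpha(x),\mu(y,z))$. Specifically, the super-left and super-right alternativity relations
\[
as^{\mu}_{\alpha}(x,y,z)+(-1)^{|x||y|}as^{\mu}_{\alpha}(y,x,z)=0,\qquad as^{\mu}_{\alpha}(x,y,z)+(-1)^{|y||z|}as^{\mu}_{\alpha}(x,z,y)=0,
\]
allow one to conclude that $as^{\mu}_{\alpha}$ is super-alternating in all three of its arguments; in particular any transposition produces a Koszul sign and cyclic permutations are controlled by the corresponding products of grading signs. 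I would record these as a short lemma at the start of the proof and use them repeatedly.

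Next I would expand $as^{\ast}_{\alpha}(x\ast y,\alpha(z),\alpha(u))$ by substituting the definition $a\ast b=\mu(a,b)+(-1)^{|a||b|}\mu(b,a)$ in every occurrence of $\ast$. Each outer $\ast$ splits a term into two, each inner $\ast$ again into two, so each of the two summands $((x\ast y)\ast\alpha(z))\ast\alpha^{2}(u)$ and $\alpha(x\ast y)\ast(\alpha(z)\ast\alpha(u))$ produces eight $\mu$-monomials. The differences pair up naturally into $\mu$-associators of the form $as^{\mu}_{\alpha}(a,b,c)$ where $(a,b,c)$ runs over the various orderings of $x,y,\alpha(z),\alpha(u)$ reshuffled by the symmetrization; the whole expression becomes a signed linear combination of such associators with coefficients built from the Koszul signs.

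Finally I would carry out the cyclic sum $\sum_{x,y,u}(-1)^{|u|(|x|+|z|)}$. Using the super-alternating property of $as^{\mu}_{\alpha}$ established at the outset, every associator that appears in the expansion is matched with another associator obtained by a transposition or cyclic shift of its arguments; the signs produced by the symmetrization and by the cyclic summation combine with the Koszul signs from super-antisymmetry so that the paired terms cancel. The main obstacle is purely bookkeeping: keeping track of the Koszul signs $(-1)^{|a||b|}$ arising from four different expansions of $\ast$ together with the global sign $(-1)^{|u|(|x|+|z|)}$ and the three cyclic shifts, and organizing the resulting $48$ $\mu$-associators into cancelling pairs. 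Once this combinatorics is laid out systematically (for instance in a table indexed by the permutation of $\{x,y,\alpha(z),\alpha(u)\}$ appearing inside each associator), the identity \eqref{Hom-Jordan super-identity} follows.
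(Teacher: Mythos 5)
The paper itself offers no proof of this proposition --- it is quoted from \cite{KFA} --- so your argument has to stand entirely on its own. Your first two steps are fine: super-commutativity of $\ast$ is immediate, and the two linearized alternativity relations do make the Hom-associator $as^{\mu}_{\alpha}$ super-alternating in its three slots. The gap is in the cancellation step. When you expand $as^{\ast}_{\alpha}(x\ast y,\alpha(z),\alpha(u))$, the left-associated and right-associated $\mu$-monomials that do pair up form associators such as $as^{\mu}_{\alpha}(x\ast y,\alpha(z),\alpha(u))$ and $as^{\mu}_{\alpha}(\alpha(u),\alpha(z),x\ast y)$, i.e.\ associators one of whose three slots is occupied by the \emph{product} $x\ast y$ rather than by a single variable; your phrase ``$(a,b,c)$ runs over the various orderings of $x,y,\alpha(z),\alpha(u)$'' already signals the problem, since four elements cannot fill three slots. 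The super-alternating property only permutes the three slots: it neither breaks open the product slot nor relates $as^{\mu}_{\alpha}(x\ast y,\alpha(z),\alpha(u))$ to $as^{\mu}_{\alpha}(y\ast u,\alpha(z),\alpha(x))$, so the cyclic sum of such terms does not cancel by transpositions and cyclic shifts alone. Worse, not every monomial in the expansion even has a partner: for instance $\mu(\mu(\alpha(z),x\ast y),\alpha^{2}(u))$ could only close up into $as^{\mu}_{\alpha}(\alpha(z),x\ast y,\alpha(u))$ together with $\mu(\alpha^{2}(z),\mu(x\ast y,\alpha(u)))$, and the latter never occurs among the $48$ monomials. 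So the expression is not a signed sum of associators in the first place, and the proposed table of cancelling pairs cannot be filled in.

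What is missing is precisely the hard part of the classical theorem that alternative algebras are Jordan-admissible. One must first derive from (Hom-super-)alternativity further identities --- the Hom-super analogues of the Teichm\"uller and Moufang identities, which express an associator with a product entry, such as $as^{\mu}_{\alpha}(\mu(w,x),\alpha(y),\alpha(z))$, in terms of products of elements with associators in single variables --- and only after substituting these does the cyclic sum collapse. In the ungraded, untwisted setting this labour is usually bypassed by Artin's theorem (two elements generate an associative subalgebra, and the Jordan identity involves only two variables before linearization), but no such shortcut is available for Hom-superalgebras, and your outline does not supply a substitute. As written, the argument stalls exactly where you declare the bookkeeping to be routine.
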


\begin{defn}
Let $(\mathcal{J}, \cdot, \alpha) $ be a Hom-Jordan superalgebra, and $V$ be a $\mathbb{Z}_2$-graded vector space. A $\mathcal{J}$-module on $V$ with respect to $\beta\in End(V)$ is an even linear map  $\pi: \mathcal{J} \to \mathfrak{gl}(V)$   such that, the following conditions are satisfied for any  $ x,y,z\in \mathcal{H}(\mathcal{J})$:
{\small\begin{align}\label{Hom-jordan-representation1}
& \beta  \pi(x)= \pi(\alpha(x))  \beta , \\
\label{Hom-jordan-representation2} &(-1)^{|x||z|}  \pi(\alpha(x)\cdot \alpha(y))\pi(\alpha(z))\beta+(-1)^{|x||y|}\pi(\alpha(y)\cdot \alpha(z))\pi(\alpha(x))\beta + (-1)^{|z||y|} \pi(\alpha(z)\cdot \alpha(x))\pi(\alpha(y))\beta  \\\nonumber
=& (-1)^{|x||z|}\pi(\alpha^2(x))\pi(y\cdot  z)  \beta + (-1)^{|x||y|}\pi(\alpha^2(y))\pi(z\cdot x)  \beta  +(-1)^{|y||z|} \pi(\alpha^2(z))\pi(x\cdot y)  \beta,\\\label{Hom-jordan-representation3}
&  (-1)^{|x||z|} \pi(\alpha(x)\cdot \alpha(y))\pi(\alpha(z)) \beta+(-1)^{|x||y|}\pi(\alpha(y)\cdot \alpha(z))\pi(\alpha(x)) \beta +(-1)^{|z|y||}  \pi(\alpha(z)\cdot \alpha(x))\pi(\alpha(y)) \beta  \\\nonumber
=& (-1)^{|x||z|} \pi(\alpha^2(x))\pi(\alpha(y))\pi(z)+(-1)^{|x||y|+|y||z|}\pi(\alpha^2(z))\pi(\alpha(y))\pi(x)+(-1)^{|z||y|+|x||z|}\pi((x\circ z)\cdot \alpha(y)) \beta^2.
\end{align}}
\end{defn}
\begin{ex}
 Let $(\mathcal{J}, \cdot, \alpha) $ be a Hom-Jordan superalgebra. Then, the even linear map $ad:\mathcal{J}\longrightarrow End(\mathcal{J})$ defined by
 $$ad(x)(y)= x\cdot y,~~\forall x,y\in \mathcal{H}(\mathcal{J}), $$
 is a $\mathcal{J}$-module with respect to $\alpha$ of $(\mathcal{J}, \cdot, \alpha) $ called the adjoint $\mathcal{J}$-module.
 \end{ex}

In fact, $(V,\pi,\beta)$  is a module of a Hom-Jordan superalgebra $(\mathcal{J},\cdot,\alpha)$ if and only if there
exists a Hom-Jordan superalgebra structure on the direct sum $\mathcal{J}\oplus V$
of the underlying vector spaces of $\mathcal{J}$ and $V$
given by
 \begin{align}
     (x+u)\cdot_{\mathcal{J}\oplus V}(y+v)=x\cdot y+\pi(x)(v)+(-1)^{|v||y|}\pi(x)(u),
 \end{align}
 and the twist map
 \begin{align}
\alpha_{\mathcal{J}\oplus V}(x+u)=\alpha(x)+\beta(u)
 \end{align}
for any $x,y\in\mathcal{H}(\mathcal{J})$ and $u,v \in \mathcal{H}(V)$, is a Hom-Jordan superalgebra called semi-direct product of the Hom-Jordan superalgebra $(\mathcal{J},\cdot,\alpha)$ and $V$ which denoted by $\mathcal{J}\ltimes V$.


We recall some facts on vector superspaces from \cite{Bai-Guo-Zhang, Cheng-Wang, M-Scheunert}. Let  $V=V_{\bar 0}\oplus
V_{\bar 1}$ and $W=W_{\bar 0}\oplus W_{\bar 1}$ be two vector superspaces over a field $\mathbb K$.
A bilinear form $\mathfrak B: V\times W \longrightarrow \mathbb K$
is called
 odd  if $\mathfrak B(V_{\bar{0}}, W_{\bar{0}})=\mathfrak B(V_{\bar{1}}, W_{\bar{1}})=0$, and
 even  if $\mathfrak B(V_{\bar{0}}, W_{\bar{1}})=\mathfrak B(V_{\bar{1}}, W_{\bar{0}})=0$;
nondegenerate if  $\mathfrak B(v, w)=0$
for all $w\in W$ implies $v=0$, and $\mathfrak B(v, w)=0$
for all $v\in V$ implies $w=0$.
The linear dual $V^*={\rm Hom}(V, \mathbb  K)$  of $V$ inherits a
$\mathbb{Z}_2$-graduation $V^*=V^*_{\bar
0}\oplus V^*_{\bar 1}$ with
\begin{equation}\label{eq:2.3}
    V^*_\alpha:=\big\{u^*\in V^*| u^*(V_{\alpha+\bar
        1})=\{0\}\big\},\;\; \forall \alpha\in \mathbb{Z}_2.
\end{equation}
Given a linear form $u^*\in V^*$ and a vector $v\in V$, the resulting $u^*(v)$ is also denoted by $\langle u^*,v\rangle$.  It defines an even nondegenerate bilinear form
$\langle -,-\rangle:V^* \times V\longrightarrow \mathbb K$, which is called the canonical pairing.
The canonical pairings induces a pairing  on $V\times V^*$ by
\begin{equation}\label{eq:2.7}\langle v,
    u^*\rangle=(-1)^{|u^*||v|}\langle u^*,v\rangle,\;\;\forall  u^*\in V^*,v\in V.
\end{equation}
We  extend $\langle -,- \rangle$ to $(V \otimes V)^* \times (V \otimes V)$ by setting (applying $(V \otimes V)^* =V^*\otimes V^*$ since $V$ is finite dimensional)
\begin{equation}\label{e.q.1}
\langle u_1^*\otimes u_2^*, v_1\otimes v_2\rangle:=(-1)^{|u_2^*||v_1|}\langle u_1^*,  v_1\rangle\langle u_2^*,  v_2\rangle, \quad \forall u_1^*, u_2^*\in V^*, v_1, v_2 \in V.
\end{equation}

 Let $(V,\pi,\beta)$ be a $\mathcal{J}$-module of $(\mathcal{J}, \cdot, \alpha)$ such that $\beta$ is invertible and $(V^*,\beta^*)$ be the dual of $(V,\beta)$ where
 $\beta^*(u) = u\circ\beta$, for $u\in V^*$. Define
$\pi^*:\mathcal{J}\longrightarrow End(V^*)$ as usual by
\begin{align}
    \langle \pi^*(x)(\xi),u\rangle=(-1)^{|x||\xi|}\langle\xi, \pi(x)u\rangle,~~\forall x\in \mathcal{J},u\in V, \xi\in V^*.
 \end{align}
 Note that in general $\pi^*$ is not a $\mathcal{J}$-module. Define $\pi^{\star}:\mathcal{J}\longrightarrow End(V^*)$ by
\begin{align}\label{def-dual-repr}
\pi^{\star}(x)(\xi)=\pi^*(\alpha(x))((\beta^{-2})^*(\xi)),~~\forall x\in \mathcal{J},\xi\in V^*.
 \end{align}
 More precisely, we have
 \begin{equation}\label{dual-rep-pair}
 \langle \pi^\star(x)(\xi),u\rangle=(-1)^{|x||\xi|}\langle\xi, \pi(\alpha^{-1}(x))(\beta^{-2}u)\rangle,~~\forall x\in \mathcal{J},u\in V, \xi\in V^*.    
 \end{equation}
 \begin{thm}\label{Dual-repres}
Under the above notations  $\pi^\star$   is a $\mathcal{J}$-module on $V^*$ with respect to $(\beta^{-1})^*$, which is called the dual representation of $(V,\pi,\beta)$.

 \end{thm}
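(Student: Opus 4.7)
The plan is to verify the three representation axioms \eqref{Hom-jordan-representation1}--\eqref{Hom-jordan-representation3} for the triple $(V^{*}, \pi^{\star}, (\beta^{-1})^{*})$ by pairing each identity with an arbitrary $u\in V$ and using \eqref{dual-rep-pair} to translate everything into a statement about the given module $(V,\pi,\beta)$. A preliminary remark I would record first is that axiom \eqref{Hom-jordan-representation1} for $\pi$ iterates to give $\pi(\alpha^{-k}(x))\beta^{-k}=\beta^{-k}\pi(x)$ for every $k\in\mathbb{Z}$, which is the tool needed to move the twists $\alpha^{-1}$ and $\beta^{-2}$ in \eqref{dual-rep-pair} past any occurrence of $\pi$.

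For the compatibility condition \eqref{Hom-jordan-representation1}, I would compute
$$\langle (\beta^{-1})^{*}\pi^{\star}(x)(\xi),u\rangle = \langle \pi^{\star}(x)(\xi),\beta^{-1}(u)\rangle = (-1)^{|x||\xi|}\langle \xi,\pi(\alpha^{-1}(x))\beta^{-3}(u)\rangle,$$
then expand $\langle \pi^{\star}(\alpha(x))(\beta^{-1})^{*}(\xi),u\rangle$ likewise, and see that both reduce to the same expression after pushing $\beta^{-1}$ past $\pi(x)$ via the iterated compatibility.

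For axioms \eqref{Hom-jordan-representation2}--\eqref{Hom-jordan-representation3}, I would process each of the six (respectively nine) composite terms in the same pattern: pair with $u\in V$, apply \eqref{dual-rep-pair} successively to strip off the outer $\pi^{\star}$'s one at a time (using the iterated compatibility to commute $\alpha^{-1}$ and $\beta^{-2}$ with the remaining factors), and collect the accumulated sign. After this reduction, each side of the dualised identity becomes $\langle \xi, (\text{a product of }\pi\text{'s acting on }u)\rangle$, and the resulting inner identity in $V$ is precisely \eqref{Hom-jordan-representation2} (respectively \eqref{Hom-jordan-representation3}) for $(V,\pi,\beta)$ evaluated at the twisted arguments $\alpha^{-1}(x),\alpha^{-1}(y),\alpha^{-1}(z)$ and $\beta^{-k}(u)$ for the appropriate $k$.

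The main obstacle will be sign bookkeeping. Each application of \eqref{dual-rep-pair} contributes a factor $(-1)^{|\text{argument}|\,|\xi|}$, and composing three of them per term produces an overall sign of the form $(-1)^{|\xi|(|x|+|y|+|z|)}$ together with a product-parity contribution from the factor $|x\cdot y|=|x|+|y|$. The cleanest bookkeeping is to keep the cyclic sum intact and track the signs $(-1)^{|u|(|x|+|z|)}$, $(-1)^{|x||z|}$, $(-1)^{|x||y|}$, $(-1)^{|y||z|}$ simultaneously: after the reduction they recombine into the corresponding signs on the $\pi$-side of \eqref{Hom-jordan-representation2}--\eqref{Hom-jordan-representation3}, where the rôle of $\xi$ is played by the substitution $u\mapsto \beta^{-k}u$. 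Because the bare algebraic content of the identities for $\pi^{\star}$ is obtained from that of $\pi$ by a purely formal symbol transfer through the canonical pairing, once the signs have been reconciled the two identities hold term-by-term for all $\xi\in V^{*}$ and $u\in V$, so by non-degeneracy of the pairing they hold as operator identities on $V^{*}$, completing the verification.
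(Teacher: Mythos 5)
Your strategy coincides with the paper's own proof: both verify the axioms for $\pi^\star$ by pairing against an arbitrary vector of $V$, using the iterated compatibility $\beta^{-k}\pi(x)=\pi(\alpha^{-k}(x))\beta^{-k}$ to move the twists past $\pi$, and invoking the module identities for $\pi$ at twisted arguments before reassembling via the (nondegenerate) pairing; the paper likewise writes out only \eqref{Hom-jordan-representation2} in full and dismisses \eqref{Hom-jordan-representation3} as ``similar''. The proposal is correct and takes essentially the same route.
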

    \begin{proof}
 For all $x\in \mathcal{H}(\mathcal{J}),\xi\in \mathcal{H}(\mathcal{J}^*)$,  we have
\begin{eqnarray*}
\pi^\star(\alpha(x))((\beta^{-1})^*(\xi))=\pi^*(\alpha^{2}(x))(\beta^{-3})^*(\xi)=(\beta^{-1})^*(\pi^*(\alpha(x))(\beta^{-2})^*(\xi))
=(\beta^{-1})^*(\pi^\star(x)(\xi)),
\end{eqnarray*}
On the other hand, for all $x,y\in \mathcal{H}(\mathcal{J}), \xi\in \mathcal{H}(\mathcal{J}^*)$ and $v\in V$, we have
\begin{align*}
&\sum_{x,y,z}(-1)^{|x||z|} \pi^{\star}(\alpha(x)\cdot \alpha(y)) \pi^{\star}(\alpha(z))((\beta^{-1})^*(\xi))(v)\\
=&\sum_{x,y,z}(-1)^{|x||z|} \pi^{*}(\alpha^{2}(x)\cdot \alpha^{2}(y)) \pi^{*}(\alpha^{4}(z))((\beta^{-5})^*(\xi))(v))\\
=&(-1)^{(|x|+|y|+|z|)|\xi|}\sum_{x,y,z}(-1)^{|y||z|}\xi(\beta^{-5} ( \pi(\alpha^{4}(z))\pi(\alpha^{2}(x)\cdot \alpha^{2}(y))(v)))\\
=&(-1)^{(|x|+|y|+|z|)|\xi|}\sum_{x,y,z}(-1)^{|y||z|}\xi(\beta^{-3} ( \pi(\alpha^{2}(z))\pi(x\cdot  y)\beta\beta^{-3}(v)))\\
=&(-1)^{(|x|+|y|+|z|)|\xi|}\sum_{x,y,z}(-1)^{|x||y|}\xi(\beta^{-3} ( \pi(\alpha(y)\cdot \alpha(z)) \pi( \alpha(x))\beta\beta^{-3}(v)))\\=&(-1)^{(|x|+|y|+|z|)|\xi|}\sum_{x,y,z}(-1)^{|x||y|}\xi(\beta^{-5} ( \pi(\alpha^3(y)\cdot \alpha^3(z)) \pi( \alpha^3(x))(v)))\\=&\sum_{x,y,z}(-1)^{|x||z|}\pi^{\star}(\alpha^{2}(x))\pi^{\star}(y \cdot z)(\beta^{-1})^{*}(v)
.
\end{align*}
Similarly, we obtain  the other identity.
Therefore, $\pi^{\star}$ is a $\mathcal{J}$-module on  $V^*$ with respect to $(\beta^{-1})^*$.
 \end{proof}
\begin{cor}\label{dual-adj-repr}
   Let $(\mathcal{J},\cdot, \alpha) $ be a Hom-Jordan superalgebra. Then the even linear map $ad^{\star}:\mathcal{J}\longrightarrow End(V^*)$ defined by
 $$ad^{\star}(x)(f)=ad^{*}(\alpha(x))(\alpha^{-2})^{*}(f),~~\forall x\in \mathcal{H}(\mathcal{J}),f\in \mathcal{H}(\mathcal{J}^*)$$
 is a $\mathcal{J}$-module of Hom-jordan superalgebra $(\mathcal{J}, \cdot, \alpha)$ on $\mathcal{J}^*$ with respect to $(\alpha^{-1})^{*}$, which called the coadjoint representation.
 \end{cor}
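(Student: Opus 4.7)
The plan is to view this corollary as a direct specialization of Theorem \ref{Dual-repres} applied to the adjoint representation, so no fresh computation of the module axioms is required.

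First I would recall from the example following the definition of a $\mathcal{J}$-module that $(\mathcal{J}, ad, \alpha)$ is itself a module of the Hom-Jordan superalgebra $(\mathcal{J}, \cdot, \alpha)$ on the underlying space $V = \mathcal{J}$: the compatibility $\alpha \circ ad(x) = ad(\alpha(x)) \circ \alpha$ is the multiplicativity of $\alpha$, and the two remaining identities \eqref{Hom-jordan-representation2}--\eqref{Hom-jordan-representation3} reduce to the Hom-Jordan super-identity together with super-commutativity. Since the paper works throughout with regular (multiplicative) Hom-superalgebras, $\alpha$ is invertible, which is exactly the hypothesis needed on $\beta = \alpha$ to apply the dual-representation construction.

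Next I would apply Theorem \ref{Dual-repres} with the data $(V, \pi, \beta) = (\mathcal{J}, ad, \alpha)$. The theorem produces a $\mathcal{J}$-module structure $\pi^\star$ on $V^* = \mathcal{J}^*$ with respect to $(\beta^{-1})^* = (\alpha^{-1})^*$, given by the formula
\begin{equation*}
\pi^\star(x)(\xi) = \pi^*(\alpha(x))\bigl((\beta^{-2})^*(\xi)\bigr).
\end{equation*}
Substituting $\pi = ad$ and $\beta = \alpha$, this becomes
\begin{equation*}
ad^\star(x)(f) = ad^*(\alpha(x))\bigl((\alpha^{-2})^*(f)\bigr), \qquad \forall\, x \in \mathcal{H}(\mathcal{J}),\; f \in \mathcal{H}(\mathcal{J}^*),
\end{equation*}
which matches the formula in the statement.

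Since the conclusion is simply the specialization of a result already established, there is no genuine obstacle: the work has effectively been done in proving that $ad$ is a module and in Theorem \ref{Dual-repres}. The only minor care point is bookkeeping around the sign conventions in the pairings \eqref{eq:2.7}--\eqref{dual-rep-pair}, but these are imported verbatim from the ambient framework and need not be re-derived here.
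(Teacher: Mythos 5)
Your proposal is correct and matches the paper's (implicit) argument exactly: the corollary is stated without proof precisely because it is the specialization of Theorem \ref{Dual-repres} to the adjoint module $(V,\pi,\beta)=(\mathcal{J},ad,\alpha)$, which is what you do. The only small wording issue is that you justify invertibility of $\beta=\alpha$ by appeal to multiplicativity, whereas what is actually needed is regularity of $\alpha$ --- but this is already presupposed by the appearance of $(\alpha^{-2})^{*}$ and $(\alpha^{-1})^{*}$ in the statement, so nothing is lost.
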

\section{$\mathcal{O}$-operators and super Hom-Jordan Yang-Baxter equation}\label{Sec3}
In this section, we introduce the notion of $\mathcal O$-operators on Hom-Jordan superalgebras with respect a given representation. Then, we give a characterization of an $\mathcal O$-operator by graphs and Nejinhuis operators on the associated  semi-direct product. We study super Hom-Jordan Yang-Baxter equation (HJYBE) and   establishes the relationship between a super skew-symmetric solution of
super HJYBE  and the corresponding $\mathcal{O}$-operators on
Hom-Jordan superalgebra with respect to the coadjoint representation.
\subsection{Definition and characterizations}

\begin{defn}
Let $(\mathcal{J},\cdot,\alpha)$ be a Hom-Jordan superalgebra and $(V,\pi,\beta)$ be a representation of $\mathcal{J}$.
An even linear map $T: V \to \mathcal{J}$ is called  $\mathcal{O}$-operator on $\mathcal{J}$ associated to $\pi$ if it satisfies
\begin{align}
 & \alpha T  =  T\beta ,\\
\label{OoperatorsCond} & T(u) \cdot T(v)=T \big( \pi(T(u))v + (-1)^{|u||v|}\pi(T(v))u \big),\quad \forall u,v \in \mathcal{H}(V).
\end{align}

\end{defn}
In particular, if $R$ is an $\mathcal{O}$-operator associated to the adjoint representation $(\mathcal{J}, ad, \alpha)$, then $R$ is called a
Rota-Baxter operator
(of weight zero) on $\mathcal{J}$, that is satisfies
\begin{align}
 R(x) \cdot R(y) = R(R(x) \cdot y + x \cdot R(y)),\;\; \forall x, y \in \mathcal{H}(\mathcal{J}).
\end{align}

In the following, we characterize $\mathcal{O}$-operators in terms of their graph.
\begin{prop}
Let   $(V, \pi,\beta)$  be a representation of   Hom-Jordan  superalgebra $\mathcal J$. Then
 an even  linear map $T:V\rightarrow \mathcal J$ is  an $\mathcal{O}$-operator  associated to $(V, \pi,\beta)$ if and only if the graph $$\mathrm{Gr}(T)=\{T(u)+u|~u\in V\}$$ of the map $T$
is a  super subalgebra of the  semi-direct product $\mathcal J\ltimes V$.
\end{prop}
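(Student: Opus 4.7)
The plan is to unpack the statement by noting that $\mathrm{Gr}(T)$ is a super subalgebra of $\mathcal{J}\ltimes V$ exactly when two conditions hold: it is stable under the twist map $\alpha_{\mathcal{J}\oplus V}$, and it is closed under the semi-direct product multiplication. I would then match these two conditions, in turn, with the two defining axioms of an $\mathcal{O}$-operator.

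First, I would handle the twist compatibility. A generic element of $\mathrm{Gr}(T)$ has the form $T(u)+u$, and applying the twist yields $\alpha_{\mathcal{J}\oplus V}(T(u)+u)=\alpha(T(u))+\beta(u)$. This sits inside $\mathrm{Gr}(T)$ for every $u\in V$ if and only if $\alpha(T(u))=T(\beta(u))$ for every $u$; that is, precisely the condition $\alpha T=T\beta$ from the definition of an $\mathcal{O}$-operator.

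Next, I would check multiplicative closure. For homogeneous $u,v\in\mathcal{H}(V)$, using the semi-direct product formula and the fact that $T$ is even (so $|T(u)|=|u|$), I would expand
\[
(T(u)+u)\cdot_{\mathcal{J}\oplus V}(T(v)+v)=T(u)\cdot T(v)+\pi(T(u))(v)+(-1)^{|u||v|}\pi(T(v))(u).
\]
Setting $w:=\pi(T(u))(v)+(-1)^{|u||v|}\pi(T(v))(u)\in V$, this product belongs to $\mathrm{Gr}(T)$ iff its $\mathcal{J}$-part equals $T(w)$, i.e.\ iff
$T(u)\cdot T(v)=T\bigl(\pi(T(u))v+(-1)^{|u||v|}\pi(T(v))u\bigr)$, which is precisely \eqref{OoperatorsCond}.

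Combining the two equivalences in both directions gives the claim. There is no real obstacle here: the only care needed is in bookkeeping the Koszul signs and in recording that the even parity of $T$ forces $|T(u)|=|u|$, which is what aligns the sign in the semi-direct product with the sign appearing in the $\mathcal{O}$-operator identity.
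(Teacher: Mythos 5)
Your proposal is correct and follows essentially the same route as the paper's proof: checking stability of the graph under the twist map to recover $\alpha T = T\beta$, and expanding the semi-direct product of $T(u)+u$ and $T(v)+v$ to match closure with the $\mathcal{O}$-operator identity. Your additional remark that evenness of $T$ aligns the Koszul signs is a helpful detail the paper leaves implicit.
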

\begin{proof}
 Let $T:V\rightarrow \mathcal J$ be a linear map. Then $\alpha_{\mathcal{J}\oplus V}(\mathrm{Gr}(T))\subset \mathrm{Gr}(T)$ if and only if $\alpha T=T\beta$.
 
 For all $u,v\in \mathcal H(V)$, we have
 \begin{align*}
    (T(u)+u)\cdot_{\pi}(T(v)+v)=&T(u)\cdot T(v)+ \pi(T(u))v+(-1)^{|u||v|}\pi(T(v))u,
 \end{align*}
 which implies that the graph $\mathrm{Gr}(T)=\{T(u)+u|~u\in V\}$ is a super subalgebra of  the Hom-Jordan superalgebra $\mathcal J\ltimes V$ if and only if
 $T$ satisfies
 \begin{align*}
    &[T(u),T(v)]=T\big(\pi(T(u))v+(-1)^{|u||v|}\pi(T(v))u\big),
 \end{align*}
 which means that $T$ is a  $\mathcal{O}$-operator.
\end{proof}

In the sequel, we characterize super  $\mathcal{O}$-operators associated to $(V, \pi,\beta)$ in terms of the Nijenhuis operators.  Recall  that a Nijenhuis operator on a Hom-Jordan superalgebra $\mathcal J$ is an aven linear map $N:\mathcal J\rightarrow \mathcal J$  satisfying, for all $x,y\in \mathcal J$, 
$$N\alpha=\alpha N,$$  $$N(x)\cdot N(y)=N\big(N(x)\cdot y+x\cdot N(y)-N(x\cdot y)\big).$$
\begin{prop}
Let  $(V, \pi,\beta)$  be a representation of Hom-Jordan superalgebra $\mathcal J$. Then
 an even linear map $T:V\rightarrow \mathcal J$ is  an $\mathcal{O}$-operator  associated to $(V, \pi,\beta)$ if and only if
$$N_T=\begin{bmatrix}
    0 & -T \\
    0  & 0
\end{bmatrix}:\mathcal J\oplus V\rightarrow \mathcal J\oplus V $$
is a Nijenhuis operator on the semi-direct product Hom-Jordan superalgebra $\mathcal J\ltimes V$.
\end{prop}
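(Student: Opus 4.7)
The plan is a direct verification: unfold what $N_T$ does to an element $X=x+u\in\mathcal J\oplus V$ and match the two defining conditions of a Nijenhuis operator with the two defining conditions of an $\mathcal O$-operator. Writing elements as column vectors, one has $N_T(x+u)=-T(u)$, which lies entirely in the $\mathcal J$-component; in particular, any expression of the form $N_T(\text{anything})$ carries only a $\mathcal J$-component, and $N_T$ applied to such an element vanishes.

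First I would treat the twist-compatibility. Since $\alpha_{\mathcal J\oplus V}(x+u)=\alpha(x)+\beta(u)$, the equality $N_T\alpha_{\mathcal J\oplus V}=\alpha_{\mathcal J\oplus V}N_T$ reduces, on applying both sides to $x+u$, to $-T(\beta(u))=-\alpha(T(u))$ for every $u\in V$, which is exactly the first $\mathcal O$-operator condition $\alpha T=T\beta$.

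Next I would compute the four ingredients of the Nijenhuis identity on homogeneous $X=x+u$, $Y=y+v$, using the semi-direct product rule recalled just before Theorem~\ref{Dual-repres}. The term $N_T(X)\cdot N_T(Y)$ simplifies immediately to $T(u)\cdot T(v)\in\mathcal J$. For the right-hand side, I would split $N_T(X)\cdot Y+X\cdot N_T(Y)-N_T(X\cdot Y)$ into its $\mathcal J$- and $V$-components. The $V$-component works out to
\[
-\pi(T(u))v-(-1)^{|u||v|}\pi(T(v))u,
\]
while the $\mathcal J$-component is some expression involving $T(u)\cdot y$, $x\cdot T(v)$ and $T\big(\pi(x)v+(-1)^{|u||y|}\pi(y)u\big)$. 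Because $N_T$ kills everything sitting in the $\mathcal J$-slot, only the $V$-component survives after applying $N_T$, producing
\[
T\bigl(\pi(T(u))v+(-1)^{|u||v|}\pi(T(v))u\bigr).
\]
Equating this to $T(u)\cdot T(v)$ is precisely \eqref{OoperatorsCond}.

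The only step that requires a little care, and which I expect to be the main obstacle, is the bookkeeping of Koszul signs in the semi-direct product formula: one must be consistent about the parity of $T(u)$ (equal to $|u|$ since $T$ is even) and about which argument $\pi$ is applied to in $X\cdot N_T(Y)$ versus $N_T(X)\cdot Y$. Once signs are checked, the proof reduces to reading off that the Nijenhuis identity on $\mathcal J\ltimes V$ holds for $N_T$ if and only if $T$ satisfies both $\alpha T=T\beta$ and \eqref{OoperatorsCond}, which concludes the equivalence in both directions.
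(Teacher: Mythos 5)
Your proposal is correct and follows essentially the same route as the paper: verify that the twist condition $N_T\alpha_{\mathcal J\oplus V}=\alpha_{\mathcal J\oplus V}N_T$ reduces to $\alpha T=T\beta$, observe that $N_T(x+u)=-T(u)$ always lands in the $\mathcal J$-slot so that $N_T$ annihilates $N_T((x+u)\cdot(y+v))$, and then read off that the surviving $V$-component yields exactly $T\bigl(\pi(T(u))v+(-1)^{|u||v|}\pi(T(v))u\bigr)$ against $N_T(X)\cdot N_T(Y)=T(u)\cdot T(v)$. (Incidentally, your justification via ``$N_T$ kills the $\mathcal J$-slot'' is the right one; the paper's stated reason ``$N_T^2=N_T$'' is a typo for $N_T^2=0$.)
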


\begin{proof}By direct computation, we can check that $N_T\alpha_{\mathcal{J}\oplus V}=\alpha_{\mathcal{J}\oplus V} N_T$ if and only if $\alpha T=T\beta.$
For all $x,y\in \mathcal J,$ $u,v\in V$, on the one hand, we have
\begin{align*}
& N_T(x+u)\cdot_{\mathcal{J}\oplus V}N_T(y+v)= T(u) \cdot T(v).
\end{align*}
On the other hand, since $N^{2}_{T}=N_{T},$ we have
\begin{align*}
&
N_T\big(N_T(x+u)\cdot_{\mathcal{J}\oplus V} (y+v)+(x+u)\cdot_{\mathcal{J}\oplus V} N_T(y+v )-N_T((x+u)\cdot_{\mathcal{J}\oplus V}(y+v))\big)\\
=& N_T \big((-T(u))\cdot_{\mathcal{J}\oplus V}(y+v)+(x+u)\cdot_{\mathcal{J}\oplus V}(-T(v))-N_T(x\cdot y+\pi(x)v+(-1)^{|u||v|}\pi(y)u)\big)\\
=& T\big(\pi(T(u))v+(-1)^{|u||v|}\pi(T(v))u \big).
\end{align*}
Therefore,  $N_T$ is a Nijenhuis operator on the semi-direct product Hom-Jordan superalgebra $\mathcal J\ltimes V$ if and only if \eqref{OoperatorsCond} is satisfied.
\end{proof}
\subsection{Super Hom-Jordan Yang-Baxter equation and $\mathcal{O}$-operators}

Let $(\mathcal{J},\cdot, \alpha)$ be a regular Hom-Jordan superalgebra, $(V,\pi,\beta)$ be a $\mathcal J$-module and $r=\displaystyle\sum_{i=1}^{n}x_i\otimes y_i  \in \mathcal{J}\otimes \mathcal{J}$ such that $\alpha^{\otimes 2}(r)=r$.
Consider the elements $r_{12} = \displaystyle\sum_{i=1}^{n} x_i \otimes y_i \otimes 1$, $r_{13} =\displaystyle\sum_{i=1}^{n} x_i \otimes 1 \otimes y_i$ and $r_{23} =\displaystyle\sum_{i=1}^{n} 1 \otimes x_i \otimes y_i$ of $\mathcal{J} \otimes \mathcal{J}\otimes \mathcal{J}$,
where 1 is a unit element in $\mathcal{J}$ and 
\begin{align*}
&r_{12}\cdot r_{13}=\sum_{i,j=1}^{n}
(-1)^{|x_j||y_i|}
x_i\cdot x_j \otimes \alpha(y_i) \otimes \alpha(y_j),\\
&r_{12}\cdot r_{23} =\sum_{i,j=1}^{n} \alpha(x_i) \otimes y_i\cdot x_j \otimes \alpha(y_j),\\
&r_{13}\cdot r_{23} = \sum_{i,j=1}^{n}
(-1)^{|x_j||y_i|}
\alpha(x_i) \otimes \alpha(x_j) \otimes y_i\cdot y_j.
\end{align*}
\begin{defn} The standard form of the super Hom-Jordan Yang-Baxter equation (super HJYBE) is given as follows:
\begin{align}\label{form-Hom-Jor-yb-eq}
  r_{12}\cdot r_{13} - r_{12}\cdot r_{23}+ r_{13}\cdot r_{23} = 0.
\end{align}\end{defn}
Let $V$ be a $\mathbb{Z}_{2}$-graded vector space. Let $\sigma:V \otimes V \rightarrow V\otimes V$ be the exchanging operator defined as
\begin{align}
     \sigma (x\otimes y)=(-1)^{|x||y|}y \otimes x,\quad \forall x,y\in V.
\end{align}

If $\sigma(r)=r$ (resp $\sigma(r)=-r$)
for $r\in V\otimes V$, then $r$ is called supersymmetric (resp super skew-symmetric).
We call $r$ solution of the super HJYBE. For a vector superspace V, under
the natural linear isomorphism $V\otimes V\cong Hom(V^*,V)$, any $r\in V\otimes V$ corresponds to a unique linear map $T_r$ from the dual space $V^{*}$ to $V$ such that  
\begin{align}\label{e.q.0}
\langle u^{*}, T_r(v^{*})\rangle=(-1)^{|r||v^{*}|}\langle u^{*}\otimes v^{*}, r\rangle \quad u^{*},v^{*}\in V^{*}
 .
\end{align}

\begin{lem}\label{4.1}
Let $V$ be a $\mathbb{Z}_{2}$-graded vector space. Let $r =\displaystyle\sum_i x_i\otimes y_i\in V\otimes V$ and $v^*\in V^*$. Then $$T_r(v^*)=\displaystyle\sum_i (-1)^{|v^*||y_i|}\langle v^*,y_i\rangle x_i\quad \text{and} \quad \alpha \circ T_r=T_r \circ (\alpha^{-1})^*$$
\end{lem}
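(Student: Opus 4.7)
My plan is to treat the two identities separately, both following directly from unpacking the defining relation~\eqref{e.q.0} together with the sign convention~\eqref{e.q.1} for the induced pairing on $V\otimes V$. By linearity it suffices to assume $r$ is $\mathbb Z_2$-homogeneous, so that each term satisfies $|x_i|+|y_i|=|r|$. For the first identity I compute $\langle u^*\otimes v^*,r\rangle$ via~\eqref{e.q.1}, obtaining $\sum_i(-1)^{|v^*||x_i|}\langle u^*,x_i\rangle\langle v^*,y_i\rangle$, and then plug this into \eqref{e.q.0} to get
\[
\langle u^*, T_r(v^*)\rangle=(-1)^{|r||v^*|}\sum_i(-1)^{|v^*||x_i|}\langle u^*,x_i\rangle\langle v^*,y_i\rangle.
\]
The key simplification is that $\langle u^*,x_i\rangle\langle v^*,y_i\rangle$ vanishes unless $|u^*|=|x_i|$ and $|v^*|=|y_i|$; on the surviving terms the exponent $(|x_i|+|y_i|)|v^*|+|v^*||x_i|$ reduces mod $2$ to $|y_i||v^*|$ (using $2|v^*||x_i|\equiv 0$). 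Comparing the resulting expression against $\bigl\langle u^*,\sum_i(-1)^{|v^*||y_i|}\langle v^*,y_i\rangle x_i\bigr\rangle$ for every $u^*$ and invoking nondegeneracy of the canonical pairing yields the claimed formula for $T_r$.

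For the second identity I would use the invariance hypothesis $\alpha^{\otimes 2}(r)=r$ recorded at the beginning of this subsection. Applying $\mathrm{id}\otimes\alpha^{-1}$ to both sides produces the auxiliary tensor identity $\sum_i\alpha(x_i)\otimes y_i=\sum_i x_i\otimes\alpha^{-1}(y_i)$. Using the explicit formula just obtained, together with the adjunction $\langle(\alpha^{-1})^*(v^*),y_i\rangle=\langle v^*,\alpha^{-1}(y_i)\rangle$ and the fact that $\alpha$ is even (so $|(\alpha^{-1})^*(v^*)|=|v^*|$), I get
\[
T_r\bigl((\alpha^{-1})^*(v^*)\bigr)=\sum_i(-1)^{|v^*||y_i|}\langle v^*,\alpha^{-1}(y_i)\rangle x_i.
\]
Contracting $v^*$ against the second factor of the auxiliary tensor identity converts the right-hand side into $\sum_i(-1)^{|v^*||y_i|}\langle v^*,y_i\rangle\alpha(x_i)$, which by the explicit formula is exactly $\alpha\bigl(T_r(v^*)\bigr)$.

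The main obstacle is purely sign bookkeeping. The pivotal step is recognizing that the prefactor $(-1)^{|r||v^*|}$ coming from~\eqref{e.q.0} and the sign $(-1)^{|v^*||x_i|}$ coming from~\eqref{e.q.1} must be combined \emph{only after} one has observed that the pairings enforce $|x_i|=|u^*|$ and $|y_i|=|v^*|$ on surviving terms; otherwise the exponents appear to disagree with the claim. Once this parity-matching is kept in view throughout, both assertions are straightforward calculations.
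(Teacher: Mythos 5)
Your proof is correct and follows essentially the same route as the paper: the explicit formula for $T_r$ comes from unwinding \eqref{e.q.0} together with \eqref{e.q.1} and the sign cancellation $(|x_i|+|y_i|)|v^*|+|x_i||v^*|\equiv |y_i||v^*|$, and the intertwining identity comes from the standing hypothesis $\alpha^{\otimes 2}(r)=r$ (the paper phrases this step at the level of pairings rather than by contracting the tensor identity, but the content is identical). One small remark: the parity-matching observation you single out as pivotal is not actually needed, since $2|v^*||x_i|\equiv 0 \pmod 2$ holds unconditionally, so the exponent reduction is valid on all terms, not just the ``surviving'' ones.
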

  \begin{proof}
Note that  $|r|=|x_i|+|y_i|$. For all $v^*, w^*\in V^*$, we have
\begin{equation*}
\langle w^*,T_r (v^*)\rangle=(-1)^{|r||v^*|}\langle w^*\otimes v^*,
r\rangle= \sum_{i}(-1)^{|y_i||v^*|}\langle
w^*,x_{i}\rangle\langle v^*,y_{i}\rangle=\Big\langle
w^*,\sum_i(-1)^{|y_i||v^*|}\langle v^*,y_i\rangle x_i\Big\rangle.
\end{equation*}
Then $T_r (v^*)=\displaystyle\sum_{i}(-1)^{|y_i||v^*|}\langle v^*,y_i\rangle x_i$. On the other hand,  the condition $\alpha^{\otimes2}r =r $ implies that
 \begin{align*}
 \langle v^*, T_r (w^*)\rangle&=(-1)^{|r ||w^*|}\langle v^*\otimes
w^*,\alpha^{\otimes2}r \rangle=(-1)^{|r ||w^*|}\langle \alpha^*(v^*)\otimes
\alpha^*(w^*),r \rangle\\&=\langle \alpha^*(v^*),
T_r (\alpha^*(w^*))\rangle=\langle v^*, \alpha\circ T_r \circ \alpha^*(w^*)\rangle.
\end{align*} 
Then,  $\alpha^{-1}T_r=T_r\alpha^*$ equivalent to $\alpha T_r=T_r(\alpha^{-1})^*$.
\end{proof}
\begin{prop}\label{sol}
   Let $(\mathcal{J},\cdot, \alpha )$ be a Hom-Jordan superalgebra. Then, an even element $r\in V\otimes V$ is a super skew-symmetric solution of super HJYBE  if and only if the associated operator $T_r:\mathcal{J}^*\rightarrow \mathcal{J}$, defined by equation \eqref{e.q.0}, is an $\mathcal{O}$-operator on $(\mathcal{J}, \cdot, \alpha)$ with
respect to the coadjoint representation
$(\mathcal{J}^*,ad^\star , (\alpha^{-1})^*)$.
\end{prop}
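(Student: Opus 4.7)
The plan is to convert the operator identity
\[
T_r(\xi)\cdot T_r(\eta) \;=\; T_r\bigl(ad^\star(T_r(\xi))\eta + (-1)^{|\xi||\eta|}ad^\star(T_r(\eta))\xi\bigr)
\]
into a scalar identity by pairing both sides with an arbitrary $\zeta\in\mathcal{H}(\mathcal{J}^*)$, and then to recognize the resulting trilinear expression, up to permutation of the three legs, as the pairing of $\xi\otimes\eta\otimes\zeta$ with $r_{12}\cdot r_{13}-r_{12}\cdot r_{23}+r_{13}\cdot r_{23}$. Because the canonical pairing on $\mathcal{J}\otimes\mathcal{J}\otimes\mathcal{J}$ is nondegenerate, the $\mathcal{O}$-operator identity will be equivalent to $r$ solving the super HJYBE.

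Concretely, I would first expand the left-hand side using Lemma \ref{4.1}: writing $T_r(\xi)=\sum_i(-1)^{|\xi||y_i|}\langle\xi,y_i\rangle x_i$ and $T_r(\eta)=\sum_j(-1)^{|\eta||y_j|}\langle\eta,y_j\rangle x_j$, the scalar $\langle\zeta,T_r(\xi)\cdot T_r(\eta)\rangle$ becomes a double sum which, after applying the sign convention \eqref{e.q.1}, matches $\langle\xi\otimes\eta\otimes\zeta,r_{13}\cdot r_{23}\rangle$ (both carry the factor $(-1)^{|x_j||y_i|}$ and the free slots are twisted by $\alpha$, which is harmless thanks to $\alpha^{\otimes 2}(r)=r$). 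For the right-hand side, I would substitute the definition \eqref{dual-rep-pair} of $ad^\star$, push the $\alpha$'s through using the compatibility $\alpha T_r = T_r(\alpha^{-1})^*$ from Lemma \ref{4.1}, and then invoke the super skew-symmetry of $r$, namely $\sum_i x_i\otimes y_i = -\sum_i(-1)^{|x_i||y_i|}y_i\otimes x_i$, to reposition each $T_r$-image on the correct leg of $r$. This identifies $\langle\zeta,T_r(ad^\star(T_r\xi)\eta)\rangle$ with the pairing against $r_{13}\cdot r_{23}-r_{12}\cdot r_{13}$ (one sign comes from the $-$ in skew-symmetry), and the $(-1)^{|\xi||\eta|}$-term with the analogous expression involving $r_{12}\cdot r_{23}$. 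Collecting terms shows that
\[
\bigl\langle\zeta,\,T_r(\xi)\cdot T_r(\eta)-T_r(ad^\star(T_r\xi)\eta+(-1)^{|\xi||\eta|}ad^\star(T_r\eta)\xi)\bigr\rangle
\]
equals (up to a uniform global sign) $\langle\xi\otimes\eta\otimes\zeta,\,r_{12}\cdot r_{13}-r_{12}\cdot r_{23}+r_{13}\cdot r_{23}\rangle$, from which the equivalence follows by nondegeneracy.

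The main obstacle will be the sign bookkeeping. Since $r$ is even one has $|x_i|+|y_i|=\bar 0$, but the homogeneous factors $x_i,y_i$ still contribute nontrivially to the Koszul signs in \eqref{e.q.1} and \eqref{dual-rep-pair}, and each application of skew-symmetry produces an extra $-(-1)^{|x_i||y_i|}$. A secondary difficulty is the correct absorption of the $\alpha$-twists appearing on the free slots of $r_{12},r_{13},r_{23}$ and in the definition $ad^\star(x)(\xi)=ad^*(\alpha(x))(\alpha^{-2})^*(\xi)$; here the identities $\alpha T_r=T_r(\alpha^{-1})^*$ and the multiplicativity of $\alpha$ perform exactly the cancellations required so that, once the signs are controlled, the three tensor terms of the super HJYBE match the three scalar terms of the $\mathcal{O}$-operator relation.
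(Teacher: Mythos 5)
Your plan — pair the $\mathcal{O}$-operator identity with a third covector, expand via Lemma \ref{4.1} and the definition \eqref{dual-rep-pair} of $ad^\star$, use $\alpha T_r=T_r(\alpha^{-1})^*$ together with the super skew-symmetry of $r$ to reposition the legs, and conclude by nondegeneracy of the pairing — is exactly the strategy of the paper's proof, which evaluates $\langle\xi\otimes\eta\otimes\gamma,\,r_{12}\cdot r_{13}-r_{12}\cdot r_{23}+r_{13}\cdot r_{23}\rangle$ and matches it, up to the global factor $-(-1)^{|\eta||\xi|}$, with the pairing of $\eta$ against the $\mathcal{O}$-operator defect. The remaining work is only the Koszul-sign and $\alpha^*$-twist bookkeeping you already identify, so the proposal is correct and takes essentially the same route.
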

\begin{proof}
Let $\xi,\eta,\gamma\in \mathcal{H}(\mathcal{J}^*)$, and $r=\displaystyle\sum_{i}x_{i}\otimes y_{i}$, by Lemma  \ref{4.1}, we have  $\alpha T_r=T_r(\alpha^{-1})^*$,
 and 
\begin{eqnarray*}
\langle \xi\otimes \eta\otimes \gamma, r_{12}\cdot r_{13}\rangle &=&\sum_{i,j}(-1)^{|x_j||y_i|}\langle \xi\otimes \eta\otimes \gamma, (x_i\cdot x_j)\otimes \alpha(y_i)\otimes \alpha(y_j)\rangle\\
&\stackrel{\eqref{e.q.1}}{=}& \sum_{i,j}(-1)^{|x_j||y_i|+(|x_i|+|x_j|)(|\eta|+|\gamma|)+|y_i||\gamma|}\langle \xi,x_i\cdot x_j\rangle \langle \eta, \alpha(y_i)\rangle \langle \gamma, \alpha(y_j)\rangle\\
&=&\sum_{i,j}(-1)^{|\eta||r|} \big\langle \xi,((-1)^{|\gamma||y_j|}\langle \gamma,\alpha(y_j)\rangle x_j)\cdot((-1)^{|\eta||y_i|}\langle \eta, \alpha(y_i)\rangle x_i)\big\rangle \\
&=&\sum_{i,j} \langle \xi,((-1)^{|\gamma||y_j|}\langle \alpha^*(\gamma),y_j\rangle x_j)\cdot((-1)^{|\eta||y_i|}\langle \alpha^*(\eta), y_i\rangle x_i)\rangle \\
&=&(-1)^{|\eta||\gamma|}\langle \xi,T_r(\alpha^*(\gamma))\cdot T_r(\alpha^*(\eta))\rangle,
\end{eqnarray*}
Similarly,  we have
\begin{eqnarray*}
&&\langle \xi\otimes \eta\otimes \gamma, -r_{12}\cdot r_{23}\rangle = -(-1)^{|\xi||\eta|} \langle \eta,T_r(\alpha^*(\xi))\cdot T_r(\alpha^*(\gamma))\rangle,\\
and
&&\langle \xi\otimes \eta\otimes \gamma, r_{13}\cdot r_{23}\rangle = (-1)^{|\xi||\eta|+|\xi||\gamma|+|\eta||\gamma|} \langle \gamma,T_r(\alpha^*(\eta))\cdot T_r(\alpha^*(\xi))\rangle.
\end{eqnarray*}
Hence
\begin{align}
\langle \xi\otimes \eta\otimes \gamma, r_{12}r_{13}-r_{12}r_{23}+r_{13}r_{23}\rangle=&(-1)^{|\eta||\gamma|}\Big(\langle \xi,T_r(\alpha^*(\gamma))\cdot T_r(\alpha^*(\eta))\rangle\label{eq:1}\\&-(-1)^{|\eta|(|\gamma|+|\xi|)} \langle \eta,T_r(\alpha^*(\xi))\cdot T_r(\alpha^*(\gamma))\rangle\nonumber\\
&+(-1)^{|\xi|(|\eta|+|\gamma|)} \langle \gamma,T_r(\alpha^*(\eta))\cdot T_r(\alpha^*(\xi))\rangle\Big).\nonumber
\end{align}
On the other hand, by using Lemma \ref{4.1}, we have

\begin{eqnarray*}
\langle \eta, T_r(ad^\star(T_r(\alpha^*(\xi)))\alpha^*(\gamma))\rangle&=&\langle \eta, T_r(ad^\star(\alpha^{-1}T_r(\xi))\alpha^*(\gamma))\rangle\\&=&\langle \eta, T_r(\alpha^* ad^\star(T_r(\xi))\gamma)\rangle\\&=&\langle \eta, T_r(\alpha^* ad^*(\alpha T_r(\xi))(\alpha^{-2})^*(\gamma))\rangle\\&=&\langle (\alpha^{-1})^*(\eta), T_r( ad^*( T_r((\alpha^{-1})^*(\xi)))(\alpha^{-2})^*(\gamma))\rangle\\&=&\langle T_r((\alpha^{-1})^*(\eta)),  ad^*( T_r((\alpha^{-1})^*(\xi)))(\alpha^{-2})^*(\gamma)\rangle\\&=&(-1)^{|\eta||\xi|}\langle ad( T_r((\alpha^{-1})^*(\xi)))T_r((\alpha^{-1})^*(\eta)),  (\alpha^{-2})^*(\gamma)\rangle\\&=&(-1)^{|\eta||\xi|}\langle\alpha^{-2} ad( T_r((\alpha^{-1})^*(\xi)))T_r((\alpha^{-1})^*(\eta)),  \gamma\rangle\\&=&(-1)^{|\eta||\xi|}\langle ad( T_r(\alpha^*(\xi)))T_r(\alpha^*(\eta)),  \gamma\rangle\\&=&(-1)^{|\eta|(|\gamma|+|\xi|)}\langle  \xi,T_r(\alpha^*(\gamma))\cdot T_r(\alpha^*(\eta))  \rangle
\end{eqnarray*}
 and 
\begin{eqnarray*}
\langle \eta, T_r(ad^\star(T_r(\alpha^*(\gamma))\alpha^*(\xi))\rangle
=(-1)^{|\eta||\gamma|}\langle  \gamma,T_r(\alpha^*(\eta))\cdot T_r(\alpha^*(\xi))  \rangle.
\end{eqnarray*}
Then
\begin{eqnarray}\label{eq:2}
&&\langle \eta, T_r(\alpha^*(\xi))\cdot T_r(\alpha^*(\gamma))-T_r(ad^\star(T_r(\alpha^*(\xi))\alpha^*(\gamma))-(-1)^{|\gamma||\xi|}T_r(ad^\star(T_r(\alpha^*(\gamma))\alpha^*(\xi))\rangle\\\nonumber
&=& \langle \eta, T_r(\alpha^*(\xi))\cdot T_r(\alpha^*(\gamma))-\langle \eta,T_r(ad^\star(T_r(\alpha^*(\xi))\alpha^*(\gamma))-(-1)^{|\gamma||\xi|}\langle \eta,T_r(ad^\star(T_r(\alpha^*(\gamma))\alpha^*(\xi))\rangle
\\&=&-(-1)^{|\eta||\xi|}\Big((-1)^{|\eta||\gamma|}\langle \xi,T_r(\alpha^*(\gamma))\cdot T_r(\alpha^*(\eta))\rangle-(-1)^{|\eta||\xi|} \langle \eta,T_r(\alpha^*(\xi))\cdot T_r(\alpha^*(\gamma))\rangle\nonumber\\\nonumber&&+(-1)^{|\xi|(|\eta|+|\gamma|)+|\eta||\gamma|} \langle \gamma,T_r(\alpha^*(\eta))\cdot T_r(\alpha^*(\xi))\rangle\Big)\\&=&-(-1)^{|\eta||\xi|}\Big(\langle \xi\otimes \eta\otimes \gamma, r_{12}r_{13}-r_{12}r_{23}+r_{13}r_{23}\rangle\Big).
\end{eqnarray}

Set $v^*=\alpha^*(\gamma),w^*=\alpha^*(\eta)$, then follows from Eqs. ~(\ref{eq:1})  and ~(\ref{eq:2}, $r$  is a super skew-symmetric solution of the HJYBE  if and only if  $T_r$  is an $\mathcal{O}$-operator on $(\mathcal{J}, \cdot, \alpha)$ with
respect to the coadjoint representation
$(\mathcal{J}^*,ad^\star , (\alpha^{-1})^*)$.
\end{proof}
Let $\{u_i,u_j\}$ be a basis of $V$, where $u_i\in V_{0}$
 and $v_j\in V_{1}$ and  $\{u^{*}_i,v^{*}_j,\}$ its associated  dual basis with
$u^{*}_i\in V^*_{0}$ and $v^*_{j}\in V^*_{1}$. We set $T:V\rightarrow \mathcal{J}$
such that $T\beta=\alpha T$. By the fact that, as a $\mathbb{Z}_{2}$-graded vector space $Hom(V,\mathcal{J})\cong \mathcal{J}\otimes V^*$, we have
{\small\begin{align}\label{yang}
    T=\sum^{p}_{i=1}T(u_i)\otimes u^{*}_{i}+\sum^{q}_{s=1}T(v_s)\otimes v^{*}_{s}\in\mathcal{J}\otimes V^* \subset (\mathcal{J}\ltimes_{\pi} V^*)^{\otimes2}.
\end{align}}

\begin{thm}\label{oper}
With the above assumption, we have $r=T-\sigma(T)$ is a super skew-symmetric solution of super HJYBE in the Hom-Jordan superalgebra $\mathcal{J}\ltimes_{\pi^\star} V^*$ if and only if, $T$ is an $\mathcal{O}$-operator associated to $\pi^\star$ with respect $(\beta^{-1})^*$.
\end{thm}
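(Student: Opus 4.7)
The plan is to apply Proposition \ref{sol} to the semi-direct product Hom-Jordan superalgebra $\widetilde{\mathcal{J}} := \mathcal{J} \ltimes_{\pi^\star} V^*$, equipped with the twist $\widetilde{\alpha} := \alpha \oplus (\beta^{-1})^*$, and then translate the resulting condition back to one on $T$ alone. First, I would verify that $r = T - \sigma(T)$ is super skew-symmetric, which is immediate from $\sigma^2 = \mathrm{id}$, and that it satisfies the invariance $\widetilde{\alpha}^{\otimes 2}(r) = r$. The latter follows from the compatibility $\alpha T = T\beta$, which when rewritten in terms of $T \in \mathcal{J} \otimes V^*$ gives $(\alpha \otimes (\beta^{-1})^*)(T) = T$, and hence also for $r$.

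Next, by Proposition \ref{sol} applied to $\widetilde{\mathcal{J}}$, $r$ is a super skew-symmetric solution of super HJYBE in $\widetilde{\mathcal{J}}$ if and only if the associated map $T_r : \widetilde{\mathcal{J}}^* \to \widetilde{\mathcal{J}}$ is an $\mathcal{O}$-operator on $\widetilde{\mathcal{J}}$ associated to the coadjoint representation $(\widetilde{\mathcal{J}}^*, \widetilde{ad}^\star, (\widetilde{\alpha}^{-1})^*)$. Using Lemma \ref{4.1} and the expansion (\ref{yang}), I would then compute $T_r$ explicitly. Under the identification $\widetilde{\mathcal{J}}^* \cong \mathcal{J}^* \oplus V$ (via $V^{**} \cong V$), the map $T_r$ takes a block-triangular form: it sends $V$ into $\mathcal{J}$ via $T$ itself, and $\mathcal{J}^*$ into $V^*$ via, up to signs, the transpose of $T$ defined through the canonical pairings (\ref{eq:2.7})--(\ref{e.q.1}).

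Then I would unpack the $\mathcal{O}$-operator condition on $T_r$ by evaluating it on pairs of inputs in $\widetilde{\mathcal{J}}^* = \mathcal{J}^* \oplus V$. The nontrivial content, coming from pairs $u, v \in V$, projects onto $\mathcal{J}$ to give exactly
\[ T(u) \cdot T(v) = T\bigl(\pi^\star(T(u)) v + (-1)^{|u||v|} \pi^\star(T(v)) u\bigr), \]
where the action of $\pi^\star$ on $V \cong V^{**}$ is understood via the canonical pairing; this is precisely the $\mathcal{O}$-operator equation for $T$ with respect to $\pi^\star$. The remaining cases (one or both inputs in $\mathcal{J}^*$) should reduce by duality and by the module axioms of $\pi^\star$ (Theorem \ref{Dual-repres}) to identities that either hold automatically or are equivalent to the main relation via the pairing. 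The converse is then read off by retracing these steps, yielding the claimed equivalence.

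The hard part will be the explicit unpacking in the last step: writing the coadjoint representation $\widetilde{ad}^\star$ of $\widetilde{\mathcal{J}}$ concretely in terms of $ad^\star$ on $\mathcal{J}^*$ and the action $\pi^\star$ on $V^*$, and tracking all the signs arising from the $\mathbb{Z}_2$-grading of the semi-direct product. The key input is Theorem \ref{Dual-repres}, which ensures that $\pi^\star$ is a genuine representation, so the auxiliary components of the $\mathcal{O}$-operator condition for $T_r$ collapse to tautologies and do not impose constraints beyond the single relation characterizing $T$ as an $\mathcal{O}$-operator associated to $\pi^\star$ with respect to $(\beta^{-1})^*$.
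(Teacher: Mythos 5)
Your route is genuinely different from the paper's. The paper proves Theorem \ref{oper} by brute force: it expands $r=T-\sigma(T)$ in the basis $\{u_i,v_s\}$ and its dual, computes $r_{12}\cdot r_{13}$, $r_{13}\cdot r_{23}$ and $-r_{12}\cdot r_{23}$ term by term inside $\mathcal{J}\ltimes_{\pi^\star}V^*$ (using the explicit formula $\pi^\star(T(u_i))u_j^*=\sum_k(\beta^{-2})^*(u_j^*)(\pi(T(\beta(u_i)))u_k)u_k^*$ and the duality between the pairs $(\beta(u_k),u_k^*)$ and $(u_k,(\beta^{-1})^*(u_k^*))$), and then reads off that the sum vanishes precisely when $T(\beta^{-1}(u))\cdot T(\beta^{-1}(v))-T(\pi(T(\beta^{-1}(u)))\beta^{-1}(v))-T(\pi(T(\beta^{-1}(v)))\beta^{-1}(u))=0$ for all basis vectors, i.e.\ when $T$ is an $\mathcal{O}$-operator associated to $\pi$ (note: the paper's own proof concludes ``associated to $\pi$'', and the subsequent Corollary confirms that this, not a condition literally phrased through $\pi^\star$, is the intended equivalence; your final displayed identity, with ``$\pi^\star$ acting on $V\cong V^{**}$ via the pairing'', is really the $\pi$-condition in disguise and you should say so explicitly). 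Your plan instead treats $\widetilde{\mathcal{J}}=\mathcal{J}\ltimes_{\pi^\star}V^*$ as a single regular Hom-Jordan superalgebra, invokes Proposition \ref{sol} to convert the super HJYBE for $r$ into the $\mathcal{O}$-operator condition for $T_r:\widetilde{\mathcal{J}}^*\to\widetilde{\mathcal{J}}$ relative to the coadjoint representation, and then unpacks blocks. This is more conceptual and reuses proven machinery, whereas the paper's computation is self-contained and does not need the coadjoint representation of the semi-direct product.

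The caveat is that your proposal stops exactly where the work begins: you assert, but do not verify, that the block components of the $\mathcal{O}$-operator condition for $T_r$ with inputs in $\mathcal{J}^*$ collapse to tautologies or to the transpose of the main relation, and that the $V$--$V$ component reproduces the $\mathcal{O}$-operator identity for $T$ with the correct powers of $\alpha$, $\beta$ and the correct signs. These checks are where all the content of the paper's proof lives (the twists $\beta^{-1}$, $\beta^{-2}$ appearing in its intermediate formulas show the bookkeeping is delicate), so as written the proposal is a sound strategy rather than a complete proof; carried out carefully it would yield an acceptable alternative argument.
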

\begin{proof}
First, according to \eqref{yang},  we have
$$r=T-\sigma(T)=\sum^{p}_{i=1}T(u_i)\otimes u^{*}_{i}+\sum^{q}_{s=1}T(v_s)\otimes v^{*}_{s}-\sum^{p}_{i=1}u^{*}_{i}\otimes T(u_i)+\sum^{q}_{s=1}v^{*}_s\otimes  T(v_{s}).$$
Then
\begin{align*}
r_{12}\cdot r_{13}=&\sum_{i,j=1}^{p}\{T(u_i)\cdot T(u_j)\otimes (\beta^{-1})^{*}(u^{*}_{i})\otimes (\beta^{-1})^{*}(u^{*}_j)-\pi^\star(T(u_i))u^{*}_j
\otimes (\beta^{-1})^{*}(u^{*}_i)\otimes \alpha(T(u_j))\\&-\pi^\star(T(u_j ))u^{*}_i\otimes \alpha(T(u_i))\otimes (\beta^{-1})^{*}(u^{*}_j)\}+\sum^{p}_{i=1}\sum^{q}_{t=1}\{T(u_i)\cdot T(v_t)\otimes (\beta^{-1})^{*}(u^{*}_{i})\otimes (\beta^{-1})^{*}(v^{*}_t)\\&+\pi^\star(T(u_i)v^{*}_t
\otimes (\beta^{-1})^{*}(u^{*}_i)\otimes \alpha(T(v_t))-\pi^\star(T(v_t ))u^{*}_i\otimes \alpha(T(u_i))\otimes (\beta^{-1})^{*}(v^{*}_t)\}\\&+\sum^{q}_{s=1}\sum^{p}_{j=1}\{T(v_s)T(u_j)\otimes (\beta^{-1})^{*}(v^{*}_{s})\otimes (\beta^{-1})^{*}(u^{*}_j)-\pi^\star(T(v_s)u^{*}_j
\otimes (\beta^{-1})^{*}(v^{*}_s)\otimes \alpha(T(u_j))\\&+\pi^\star(T(u_j ))v^{*}_s\otimes \alpha(T(v_s))\otimes (\beta^{-1})^{*}(u^{*}_j)\}+\sum_{s,t=1}^{q}\{-T(v_s)\cdot T(v_t)\otimes (\beta^{-1})^{*}(v^{*}_{s})\otimes (\beta^{-1})^{*}(v^{*}_t)\\&-\pi^\star(T(v_s)v^{*}_t
\otimes (\beta^{-1})^{*}(v^{*}_s)\otimes \alpha(T(v_t))+\pi^\star(T(v_t ))v^{*}_s\otimes \alpha(T(v_s))\otimes (\beta^{-1})^{*}(v^{*}_t)\}
\end{align*}
By  definition of $\pi^\star$, we have
$$\pi^\star(T(u_i))u_j^*=\pi^{*}(\alpha(T(u_i)))(\beta^{-2})^*(u^{*}_{j})=\sum^{p}_{k=1}(\beta^{-2})^*(u^{*}_{j})(\pi(T(\beta(u_i))))u_{k})u^{*}_{k}.$$

So,
\begin{align*}\sum_{i,j=1}^{p}\pi^\star(T(u_i))u^{*}_j
\otimes (\beta^{-1})^{*}(u^{*}_i)\otimes \alpha(T(u_j))&=
  \sum^{p}_{i,j=1}\pi^{*}(\alpha(T(u_i)))(\beta^{-2})^*(u^{*}_{j})\otimes (\beta^{-1})^{*}(u^{*}_{i})\otimes \alpha(T(u_j))\\&=\sum^{p}_{i,j,k=1}(\beta^{-2})^*(u^{*}_{j})(\pi(T(\beta(u_i)))u_{k})u^{*}_{k}\otimes (\beta^{-1})^{*}(u^{*}_{i})\otimes \alpha(T(u_j))\\&=\sum^{p}_{i,j,k=1}u^{*}_{j}(\pi(T(\beta^{-1}(u_i)))\beta^{-2}(u_{k}))u^{*}_{k}\otimes (\beta^{-1})^{*}(u^{*}_{i})\otimes \alpha(T(u_j))\\&=\sum^{p}_{i,k=1}u^{*}_{k}\otimes (\beta^{-1})^{*}(u^{*}_{i})\otimes \alpha T(\sum^{p}_{j=1}u^{*}_{j}(\pi(T(\beta^{-1}(u_i)))\beta^{-2}(u_{k}))u_j))\\&=\sum^{p}_{i,k=1}u^{*}_{k}\otimes (\beta^{-1})^{*}(u^{*}_{i})\otimes \alpha T(\pi(T(\beta^{-1}(u_i))))\beta^{-2}(u_{k})\\&=\sum^{p}_{i,k=1}u^{*}_{k}\otimes (\beta^{-1})^{*}(u^{*}_{i})\otimes \alpha T(\pi(T(u_i)))\beta^{-1}(u_{k})\\&=\sum^{p}_{i,k=1}u^{*}_{k}\otimes \sum^{p}_{j=1}(\beta^{-1})^{*}(u^{*}_{i})u_ju_j^*\otimes \alpha T(\pi(T(u_i)))\beta^{-1}(u_{k})\\&=\sum^{p}_{j,k=1}u^{*}_{k}\otimes u_j^*\otimes \alpha T(\pi(T(\sum^{p}_{i=1}(\beta^{-1})^{*}(u^{*}_{i})(u_j)u_i)))\beta^{-1}(u_{k})\\&=\sum^{p}_{j,k=1}u^{*}_{k}\otimes u_j^*\otimes T(\pi(T(\beta^{-1}(u_j))))\beta^{-1}(u_{k}).
\end{align*}
Where in the last step is from the fact that pairs $(\beta(u_k),u^*_{k})$ and $(u_k,
(\beta^{-1})^{*}(u^{*}_k))$ are equivalent to each other in the sense of dual basis.
Then we get
\begin{align*}
  r_{12}\cdot r_{13}=&\sum_{i,j=1}^{p}\{T(\beta^{-1}(u_i))\cdot T(\beta^{-1}(u_j))\otimes u^{*}_{i}\otimes u^{*}_j-u^{*}_{i}\otimes u^{*}_{j}\otimes T(\pi(T(\beta^{-1}(u_j))))\beta^{-1}(u_{i})\\&- u^{*}_i\otimes T(\pi(T(\beta^{-1}(u_j)))\beta^{-1}(u_i))\otimes u^{*}_j\}+\sum^{p}_{i=1}\sum^{q}_{t=1}\{T(\beta^{-1}(u_i))\cdot T(\beta^{-1}(v_t))\otimes u^{*}_{i}\otimes v^{*}_t\\&+u^{*}_{i}
\otimes v^{*}_t\otimes T(\pi(T(\beta^{-1}(v_t)))\beta^{-1}(u_i))-u^{*}_i\otimes T(\pi(T(\beta^{-1}(v_t)))\beta^{-1}(u_i))\otimes v^{*}_t\}\\&+\sum^{q}_{s=1}\sum^{p}_{j=1}\{T(\beta^{-1}(v_s))\cdot T(\beta^{-1}(u_j))\otimes v^{*}_{s}\otimes u^{*}_j+v^{*}_s\otimes u^{*}_j\otimes T(\pi(T(\beta^{-1}(u_j)))\beta^{-1}(u_s))\\&+ v^{*}_{s}\otimes T(\pi T(\beta^{-1}(u_j)))\beta^{-1}(u_s))\otimes u^{*}_j\}+\sum_{s,t=1}^{q}\{-T(\beta^{-1}(v_s))\cdot T(\beta^{-1}(v_t))\otimes v^{*}_{s}\otimes v^{*}_t\\&-v^{*}_s\otimes v^{*}_{t} \otimes T(\pi(T(\beta^{-1}(v_t)))\beta^{-1}(u_s)))- v^{*}_s\otimes T(\pi(T(\beta^{-1}(v_t)))\beta^{-1}(u_s))\otimes v^{*}_t\}.
\end{align*}
By the same way, we have:
\begin{align*}
r_{13}\cdot r_{23}=&\sum_{i,j=1}^{p}\{-T(\pi(T(\beta^{-1}(u_i)))\beta^{-1}(u_j))\otimes u^{*}_{i}\otimes u^{*}_{j}+ u^{*}_{i}\otimes u^{*}_j\otimes T(\beta^{-1}(u_i))\cdot T(\beta^{-1}(u_j))\}\\&- u^{*}_i\otimes T(\pi(T(\beta^{-1}(u_i)))\beta^{-1}(u_j))\otimes u^{*}_j\}+\sum^{p}_{i=1}\sum^{q}_{t=1}\{-T(\pi(T(\beta^{-1}(u_i)))\beta^{-1}(u_t))\otimes u^{*}_{i}\otimes v^{*}_{t}\\&+ u^{*}_{i}\otimes v^{*}_t\otimes T(\beta^{-1}(u_i))\cdot T(\beta^{-1}(v_t))- u^{*}_i\otimes T(\pi(T(\beta^{-1}(u_i)))\beta^{-1}(u_t))\otimes v^{*}_t\}\\&+\sum^{q}_{s=1}\sum^{p}_{j=1}\{-T(\pi(T(\beta^{-1}(v_s)))\beta^{-1}(u_j))\otimes v^{*}_{s}\otimes u^{*}_{j}+ v^{*}_{s}\otimes u^{*}_j\otimes T(\beta^{-1}(v_s))\cdot T(\beta^{-1}(u_j))\\&+v^{*}_s\otimes T(\pi(T(\beta^{-1}(v_s)))\beta^{-1}(u_j))\otimes u^{*}_j\}+\sum^{q}_{s,t=1}\{T(\pi(T(\beta^{-1}(v_s)))\beta^{-1}(u_t))\otimes v^{*}_{s}\otimes v^{*}_{t}\\&-v^{*}_{s}\otimes v^{*}_t\otimes T(\beta^{-1}(v_s))\cdot T(\beta^{-1}(v_t))+ v^{*}_s\otimes T(\pi(T(\beta^{-1}(v_s)))\beta^{-1}(u_t))\otimes v^{*}_t\},
\end{align*}
and
\begin{align*}
-r_{12}\cdot r_{23}=&\sum_{i,j=1}^{p}\{-T(\pi(T(\beta^{-1}(u_j)))\beta^{-1}(u_i))\otimes u^{*}_{i}\otimes u^{*}_{j}- u^{*}_{i}\otimes u^{*}_j\otimes T(\pi(T(\beta^{-1}(u_i)))\beta^{-1}(u_j))\\&+ u^{*}_i\otimes T(\beta^{-1}(u_i))\cdot T(\beta^{-1}(u_j))\otimes u^{*}_j\}+\sum^{p}_{i=1}\sum^{q}_{t=1}\{-T(\pi(T(\beta^{-1}(v_t)))\beta^{-1}(u_i))\otimes u^{*}_{i}\otimes v^{*}_{t}\\&+ u^{*}_{i}\otimes v^{*}_t\otimes T(\pi(T(\beta^{-1}(u_i)))\beta^{-1}(u_t))+ u^{*}_i\otimes T(\beta^{-1}(u_i))\cdot T(\beta^{-1}(v_t))\otimes v^{*}_t\}\\&+\sum^{q}_{s=1}\sum^{p}_{j=1}\{-T(\pi(T(\beta^{-1}(u_j)))\beta^{-1}(u_s))\otimes v^{*}_{s}\otimes u^{*}_{j}+ v^{*}_{s}\otimes u^{*}_j\otimes T(\pi(T(\beta^{-1}(v_s)))\beta^{-1}(u_j))\\&- v^{*}_s\otimes T(\beta^{-1}(v_s))\cdot T(\beta^{-1}(u_j))\otimes u^{*}_j\}+\sum^{q}_{s,t=1}\{-T(\pi(T(\beta^{-1}(v_t)))\beta^{-1}(u_s))\otimes v^{*}_{s}\otimes v^{*}_{t}\\&+v^{*}_{s}\otimes v^{*}_t\otimes T(\pi(T(\beta^{-1}(\beta^{-1}(u_s))))v_t)-  v^{*}_s\otimes T(\beta^{-1}(v_s))\cdot T(\beta^{-1}(v_t))\otimes v^{*}_t\},
\end{align*}
Therefore,
\begin{align*}
&r_{12}r_{13}+r_{13}r_{23}-r_{12}r_{23}\\=&\sum_{i,j=1}^{p}\{\big(T(\beta^{-1}(u_i))\cdot T(\beta^{-1}(u_j))-T(\pi(T(\beta^{-1}(u_i)))\beta^{-1}(u_j))-T(\pi(T(\beta^{-1}(u_j)))\beta^{-1}(u_i)\big)\otimes u^{*}_{i}\otimes u^{*}_{j}\\&+u^{*}_i\otimes u^{*}_{j}\otimes \big(T(\beta^{-1}(u_i))\cdot T(\beta^{-1}(u_j))-T(\pi(T(\beta^{-1}(u_i)))\beta^{-1}(u_j))-T(\pi(T(\beta^{-1}(u_j)))\beta^{-1}(u_i)\big)\\&+u^{*}_{i}\otimes \big(T(\beta^{-1}(u_i))\cdot T(\beta^{-1}(u_j))-T(\pi(T(\beta^{-1}(u_i)))\beta^{-1}(u_j))-T(\pi(T(\beta^{-1}(u_j)))\beta^{-1}(u_i)\big)\otimes u^{*}_{j}\}\\&+\sum^{p}_{i=1}\sum^{q}_{t=1}\{\big(T(\beta^{-1}(u_i))\cdot T(\beta^{-1}(v_t))-T(\pi(T(\beta^{-1}(u_i)))\beta^{-1}(u_t))-T(\pi(T(\beta^{-1}(v_t)))\beta^{-1}(u_i)\big)\otimes u^{*}_{i}\otimes v^{*}_{t}\\&-u^{*}_i\otimes v^{*}_{t}\otimes \big(T(\beta^{-1}(u_i))\cdot T(\beta^{-1}(v_t))-T(\pi(T(\beta^{-1}(u_i)))\beta^{-1}(u_t))-T(\pi(T(\beta^{-1}(v_t))))\beta^{-1}(u_i)\big)\\&+v^{*}_{t}\otimes \big(T(\beta^{-1}(u_i))\cdot T(\beta^{-1}(v_t))-T(\pi(T(\beta^{-1}(u_i)))\beta^{-1}(u_t))-T(\pi(T(\beta^{-1}(v_t)))\beta^{-1}(u_i)\big)\otimes v^{*}_{t}\}\\&+\sum^{q}_{s=1}\sum^{p}_{j=1}\{\big(T(\beta^{-1}(v_s))\cdot T(\beta^{-1}(u_j))-T(\pi(T(\beta^{-1}(v_s)))\beta^{-1}(u_j))-T(\pi(T(\beta^{-1}(u_j)))\beta^{-1}(u_s)\big)\otimes v^{*}_{s}\otimes u^{*}_{j}\\&-v^{*}_s\otimes u^{*}_{j}\otimes \big(T(\beta^{-1}(v_s))\cdot T(\beta^{-1}(u_j))-T(\pi(T(\beta^{-1}(v_s)))\beta^{-1}(u_j))-T(\pi(T(\beta^{-1}(u_j)))\beta^{-1}(u_s)\big)\\&-v^{*}_{s}\otimes \big(T(\beta^{-1}(v_s))\cdot T(\beta^{-1}(u_j))-T(\pi(T(\beta^{-1}(v_s)))\beta^{-1}(u_j))-T(\pi(T(\beta^{-1}(u_j)))\beta^{-1}(u_s)\big)\otimes u^{*}_{j}\}\\&+\sum^{q}_{s,t=1}\{-\big(T(\beta^{-1}(v_s))\cdot T(\beta^{-1}(v_t))-T(\pi(T(\beta^{-1}(v_s)))\beta^{-1}(u_t))+T(\pi(T(\beta^{-1}(v_t)))\beta^{-1}(u_s)\big)\otimes v^{*}_{s}\otimes v^{*}_{t}\\&-v^{*}_s\otimes v^{*}_{t}\otimes \big(T(\beta^{-1}(v_s))\cdot T(\beta^{-1}(v_t))-T(\pi(T(\beta^{-1}(v_s)))\beta^{-1}(u_t))+T(\pi(T(\beta^{-1}(v_t)))\beta^{-1}(u_s)\big)\\&-v^{*}_{s}\otimes \big(T(\beta^{-1}(v_s))\cdot T(\beta^{-1}(v_t))-T(\pi(T(\beta^{-1}(v_s)))\beta^{-1}(u_t))-T(\pi(T(\beta^{-1}(v_t)))\beta^{-1}(u_s)\big)\otimes v^{*}_{t}\}.
\end{align*}
So, by the fact that $\beta$ is bijective then, $r$ is a super
skew-symmetric solution of super HJYBE in the Hom-Jordan superalgebra $\mathcal{J}\ltimes_{\pi^\star} V^*$ if and only if T is an $\mathcal{O}$-operator associated to $\pi$.
\end{proof}
A direct conclusion from Proposition \ref{sol} and Theorem \ref{oper} is given as follows.
\begin{cor}
Let $(\mathcal{J},\cdot,\alpha)$ be a Hom-Jordan superalgebra and $(V,\pi,\beta)$ be representation. Let $T: V \to \mathcal{J}$ be an even linear map. Then, the following three conditions are equivalent:
\begin{itemize}
    \item[(i)] T is an $\mathcal{O}$-operator of $\mathcal{J}$ associated to $\pi$.
    \item[(ii)] $T-\sigma(T)$ is a super skew-symmetric solution of super HJYBE in the Hom-Jordan superalgebra $\mathcal{J}\ltimes_{\pi^\star}V^*$.
    \item[(iii)] $T-\sigma(T)$ is an $\mathcal{O}$-operator of the Hom-Jordan superalgebra $\mathcal{J}\ltimes_{\pi^\star}V^{*}$ associated to $ad^{\star}$.
\end{itemize}
\end{cor}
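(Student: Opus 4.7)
The plan is to obtain the three-way equivalence by chaining two results already proved in the paper, without redoing any of the tensor bookkeeping. The implication (i) $\Leftrightarrow$ (ii) is literally the content of Theorem \ref{oper}, so all that is needed there is a direct invocation. For (ii) $\Leftrightarrow$ (iii) I would apply Proposition \ref{sol} not to the original Hom-Jordan superalgebra $(\mathcal{J},\cdot,\alpha)$, but to the semi-direct product Hom-Jordan superalgebra $\mathcal{J}\ltimes_{\pi^\star} V^{*}$ carrying the twist $\alpha\oplus(\beta^{-1})^{*}$.

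Concretely, I would first check that $r=T-\sigma(T)\in(\mathcal{J}\ltimes_{\pi^\star}V^{*})^{\otimes 2}$ satisfies the invariance condition $(\alpha\oplus(\beta^{-1})^{*})^{\otimes 2}(r)=r$; this follows from $T\beta=\alpha T$ combined with the fact that the dual basis pairs $(u_i,u_i^{*})$ and $(v_s,v_s^{*})$ transform compatibly under $\beta$ and $(\beta^{-1})^{*}$. Once this is in place, Proposition \ref{sol} applied to $\mathcal{J}\ltimes_{\pi^\star}V^{*}$ asserts exactly that $r$ is a super skew-symmetric solution of the super HJYBE in $\mathcal{J}\ltimes_{\pi^\star}V^{*}$ if and only if the associated operator
\begin{equation*}
T_{r}\colon (\mathcal{J}\ltimes_{\pi^\star}V^{*})^{*}\cong \mathcal{J}^{*}\oplus V\longrightarrow \mathcal{J}\ltimes_{\pi^\star}V^{*}
\end{equation*}
is an $\mathcal{O}$-operator on $\mathcal{J}\ltimes_{\pi^\star}V^{*}$ with respect to its coadjoint representation $ad^{\star}$.

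The only real work is therefore the identification of $T_{r}$ with $T-\sigma(T)$ so that condition (iii) can be read off directly. Using Lemma \ref{4.1} and the decomposition \eqref{yang}, and pairing against a basis element $\xi+u\in \mathcal{J}^{*}\oplus V$ via \eqref{e.q.1}, one computes that the $\mathcal{J}$-component of $T_{r}(\xi+u)$ recovers $T(u)$ up to the twist coming from Lemma \ref{4.1}, while the $V^{*}$-component recovers $-\xi\circ T$ (modulo Koszul signs from the super pairing \eqref{eq:2.7}). The main obstacle, and the one I would spend the most care on, is precisely this sign-tracking: the supersymmetric nature of $\sigma$ together with the dual pairing \eqref{e.q.1} forces the appearance of Koszul signs which must cancel so that $T_{r}$ coincides (up to the declared evenness) with $T-\sigma(T)$ as a map. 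Once this identification is made, the corollary is the conjunction of (i) $\Leftrightarrow$ (ii) from Theorem \ref{oper} and (ii) $\Leftrightarrow$ (iii) from Proposition \ref{sol}.
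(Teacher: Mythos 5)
Your proposal is correct and matches the paper's intent exactly: the corollary is stated there as ``a direct conclusion from Proposition \ref{sol} and Theorem \ref{oper}'', and you chain precisely those two results, with (i) $\Leftrightarrow$ (ii) from Theorem \ref{oper} and (ii) $\Leftrightarrow$ (iii) from Proposition \ref{sol} applied to $\mathcal{J}\ltimes_{\pi^\star}V^{*}$ with twist $\alpha\oplus(\beta^{-1})^{*}$. Your extra checks (the invariance $(\alpha\oplus(\beta^{-1})^{*})^{\otimes 2}(r)=r$ and the identification of $T_{r}$ with the pair $(T,-\xi\circ T)$) are exactly the details the paper leaves implicit; note only that the statement of Theorem \ref{oper} says ``associated to $\pi^{\star}$'' while its proof (and condition (i)) concludes ``associated to $\pi$'', a typo you have implicitly and correctly resolved in favour of $\pi$.
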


If $r$ is an even super skew-symmetric solution of super HJYBE, then from Proposition \ref{sol}, $T_r:\mathcal{J}^*\rightarrow \mathcal{J}$ is an $\mathcal{O}$-operator on the Hom-Jordan superalgebra $(\mathcal{J}, \cdot, \alpha)$ with
respect to the coadjoint representation
$(\mathcal{J}^*,ad^\star , (\alpha^{-1})^*)$. Thus, from Proposition \ref{induced hompre}, the triplet
$(\mathcal{J}^*,\circ, (\alpha^{-1})^*)$is a Hom-pre-Jordan superalgebra, where $\xi \circ \eta=ad^\star(T_r(\xi))\eta$.  Then there exists an associated Hom-Jordan superalgebra structure on $\mathcal{J}^*$ given by
\begin{align}
   \xi \cdot \eta=ad^\star(T_r(\xi))\eta+(-1)^{|\xi||\eta|}ad^\star(T_r(\eta))\xi \quad \forall \xi,\eta\in \mathcal{H}(\mathcal{J}^*).
  \end{align}
  In the following, we give the notion of symplectic Hom-Jordan superalgebras and we provide some constructions.
\begin{defn}
Let $(\mathcal{J},\cdot,\alpha)$ be a Hom-Jordan superalgebra. An even bilinear form $\mathfrak{B}:\mathcal{J}\times\mathcal{J}\to \mathbb K$ is called
\begin{enumerate}
\item[(1)] supersymmetric if $\mathfrak{B}(x,y)=(-1)^{|x||y|}\mathfrak{B}(y,x),\,\,\ \forall x,y\in\mathcal{H}(\mathcal{J})$
\item[(2)] super-skewsymmetric if $\mathfrak{B}(x,y)=-(-1)^{|x||y|}\mathfrak{B}(y,x),\;\forall x,y\in\mathcal{H}(\mathcal{J})$.
\end{enumerate}
If $\mathfrak{B}$ is  nondegenerate supersymmetric, bilinear form on $\mathcal{J}$ satisfying
\begin{align}\label{alpha-symemetri}\mathfrak{B}(\alpha(x),\alpha(y))&=\mathfrak{B}(x, y),\\\label{2-cocycle}
\mathfrak{B}(x\cdot y,\alpha(z))&=\mathfrak{B}(\alpha(x),y\cdot z)+(-1)^{|x|(|y|+|z|)}\mathfrak{B}(\alpha(y),z\cdot x)\quad \forall x,y,z \in \mathcal{H}(\mathcal{J}).
  \end{align}

 We say that $(\mathcal{J},\cdot, \alpha ,\mathfrak{B})$ is a symplectic Hom-Jordan superalgebra.
\end{defn}
\begin{lem}\label{NondegForm}
  Let $(\mathcal{J},\cdot, \alpha )$ be a Hom-Jordan superalgebra. Any invertible linear map  $T:\mathcal{J}^{*}\rightarrow \mathcal{J}$ induces a nondegenerate bilinear form $\mathfrak{B}$ on $\mathcal{J}$ by:
  \begin{align}\label{quadratic}
  \mathfrak{B}(x,y)=\langle T^{-1}(x),y\rangle,\quad \forall x,y\in  \mathcal{J}.
  \end{align}
  Furthermore, $T$ is called supersymmetric (resp. super skew-symmetric) if the induced bilinear form $\mathfrak{B}$ is supersymmetric (resp. super skew-symmetric).
\end{lem}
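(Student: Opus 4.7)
The plan is to verify directly that the formula $\mathfrak{B}(x,y)=\langle T^{-1}(x),y\rangle$ indeed defines an even, nondegenerate bilinear form on $\mathcal{J}$, after which the second sentence of the statement is simply a definition that records the terminology for $T$. There is no deep step in this lemma; it is really a bookkeeping result that translates invertible linear maps $\mathcal{J}^{*}\to \mathcal{J}$ into nondegenerate pairings on $\mathcal{J}$, and I will keep the exposition accordingly compact.

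First I would observe that $\mathfrak{B}$ is bilinear because the canonical pairing $\langle-,-\rangle$ from \eqref{eq:2.7} is bilinear and $T^{-1}\colon \mathcal{J}\to \mathcal{J}^{*}$ is linear. Next, since $T$ is assumed to be an even linear map between the $\mathbb{Z}_{2}$-graded spaces $\mathcal{J}^{*}$ and $\mathcal{J}$, its inverse $T^{-1}$ is also even, and the canonical pairing is even by construction (from \eqref{eq:2.3}); hence $\mathfrak{B}$ is an even bilinear form on $\mathcal{J}$, that is $\mathfrak{B}(\mathcal{J}_{i},\mathcal{J}_{j})=0$ whenever $i+j\neq\bar 0$ in $\mathbb{Z}_{2}$.

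The main verification is nondegeneracy, carried out separately in each argument. Suppose $x\in\mathcal{J}$ satisfies $\mathfrak{B}(x,y)=0$ for all $y\in\mathcal{J}$. Then $\langle T^{-1}(x),y\rangle=0$ for every $y$, and the nondegeneracy of the canonical pairing $\langle-,-\rangle\colon\mathcal{J}^{*}\times\mathcal{J}\to\mathbb{K}$ forces $T^{-1}(x)=0$; since $T^{-1}$ is a bijection this gives $x=0$. Conversely, if $y\in\mathcal{J}$ satisfies $\mathfrak{B}(x,y)=0$ for all $x\in\mathcal{J}$, then $\langle T^{-1}(x),y\rangle=0$ for all $x$; as $T^{-1}$ is surjective, this equivalently means $\langle\xi,y\rangle=0$ for every $\xi\in\mathcal{J}^{*}$, and again nondegeneracy of the canonical pairing yields $y=0$. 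Hence $\mathfrak{B}$ is nondegenerate, which is the content of the first part of the lemma.

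Finally, the second sentence is best read as attaching terminology: we simply \emph{define} $T$ to be supersymmetric (respectively super skew-symmetric) when the bilinear form $\mathfrak{B}$ it induces through \eqref{quadratic} satisfies the corresponding supersymmetry condition from the previous definition. Consistency of this terminology with the tensorial point of view is immediate: under the isomorphism $\mathrm{Hom}(\mathcal{J}^{*},\mathcal{J})\cong \mathcal{J}\otimes\mathcal{J}$, the bilinear form attached to $T$ is symmetric (resp.\ skew-symmetric) in the super sense if and only if the corresponding tensor satisfies $\sigma(r)=r$ (resp.\ $\sigma(r)=-r$), so no additional verification is required. The only mild point to flag is the evenness of $T^{-1}$, which is the reason why the induced bilinear form has a well-defined $\mathbb{Z}_{2}$-parity; once this is noted, the proof has nothing further to do.
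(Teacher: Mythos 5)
Your verification is correct: bilinearity and evenness follow from those of the canonical pairing and of $T^{-1}$, and nondegeneracy in each argument follows from nondegeneracy of $\langle-,-\rangle$ together with the injectivity and surjectivity of $T^{-1}$, while the final sentence is indeed just terminology. The paper states Lemma \ref{NondegForm} without proof, treating it as an immediate bookkeeping fact, so your argument simply supplies the routine verification the authors leave implicit (your remark that $T$ must be taken even for $\mathfrak{B}$ to have a well-defined parity is a reasonable and harmless precision).
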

\begin{prop}
  Let $(\mathcal{J},\cdot,\alpha)$ be a Hom-Jordan superalgebra and $r\in\mathcal{J}\otimes \mathcal{J}$ be a super skew-symmetric nondegenerate. Then $r$ is a solution of super HJYBE satisfying $\alpha^{\otimes2}r=r$ in $\mathcal{J}$ if and only if, there exists a symplectic  bilinear form $\mathfrak{B}$ on $\mathcal{J}$ defined by $\mathfrak{B}(x,y)=\langle T_r^{-1}(x),y\rangle$.
  \end{prop}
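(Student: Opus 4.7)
The key idea is to exploit the triangle between $r$, the associated operator $T_r:\mathcal J^*\to\mathcal J$, and the bilinear form $\mathfrak B$. Because $r$ is nondegenerate, $T_r$ is invertible, so $\mathfrak B(x,y):=\langle T_r^{-1}(x),y\rangle$ is a well-defined nondegenerate even bilinear form on $\mathcal J$ by Lemma \ref{NondegForm}; moreover, unfolding $\langle u^*,T_r(v^*)\rangle=(-1)^{|r||v^*|}\langle u^*\otimes v^*,r\rangle$ with $u^*=T_r^{-1}(x)$, $v^*=T_r^{-1}(y)$ shows that the super skew-symmetry of $r$ transfers directly to the corresponding symmetry of $\mathfrak B$. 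Thus only two symplectic axioms remain to check, and they are precisely matched with the two hypotheses on $r$.

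For the $\alpha$-invariance \eqref{alpha-symemetri}, Lemma \ref{4.1} already gives $\alpha^{\otimes 2}(r)=r\Leftrightarrow \alpha\circ T_r=T_r\circ(\alpha^{-1})^*$, equivalently $T_r^{-1}\circ\alpha=(\alpha^{-1})^*\circ T_r^{-1}$, and then
\begin{align*}
\mathfrak B(\alpha(x),\alpha(y))=\langle T_r^{-1}(\alpha(x)),\alpha(y)\rangle=\langle(\alpha^{-1})^*T_r^{-1}(x),\alpha(y)\rangle=\langle T_r^{-1}(x),y\rangle=\mathfrak B(x,y),
\end{align*}
the chain being reversible thanks to the invertibility of $T_r$.

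For the $2$-cocycle identity \eqref{2-cocycle} I would invoke Proposition \ref{sol}: $r$ is a super skew-symmetric solution of super HJYBE if and only if $T_r$ is an $\mathcal O$-operator for the coadjoint representation $(\mathcal J^*,ad^\star,(\alpha^{-1})^*)$, that is,
\begin{equation*}
T_r(\xi)\cdot T_r(\eta)=T_r\bigl(ad^\star(T_r(\xi))\eta+(-1)^{|\xi||\eta|}ad^\star(T_r(\eta))\xi\bigr).
\end{equation*}
Setting $x=T_r(\xi)$, $y=T_r(\eta)$, applying $T_r^{-1}$ and then pairing with $\alpha(z)$, the left-hand side becomes $\mathfrak B(x\cdot y,\alpha(z))$. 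For the right-hand side I would unfold $ad^\star$ via \eqref{dual-rep-pair}, use multiplicativity of $\alpha$ to simplify $\alpha^{-2}(\alpha(x)\cdot\alpha(z))=\alpha^{-1}(x)\cdot\alpha^{-1}(z)$, and then apply both the $\alpha$-invariance of $\mathfrak B$ and its super skew-symmetry established above to land on $\mathfrak B(\alpha(x),y\cdot z)+(-1)^{|x|(|y|+|z|)}\mathfrak B(\alpha(y),z\cdot x)$. Nondegeneracy of $\mathfrak B$ (equivalently invertibility of $T_r$) permits the same computation to be run backwards, yielding the converse.

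The main obstacle is purely bookkeeping: tracking the $\alpha$-twists hidden in $ad^\star(x)(f)=ad^*(\alpha(x))(\alpha^{-2})^*(f)$ and the Koszul signs that accumulate when pairing $\langle ad^\star(\cdot)\cdot,\alpha(z)\rangle$ instead of $\langle\cdot,z\rangle$. Once the dictionary $(T_r,\mathfrak B,ad^\star)\leftrightarrow(\mathrm{HJYBE},\text{2-cocycle})$ is fixed and the relation $\alpha T_r=T_r(\alpha^{-1})^*$ is used consistently, every step reduces to a reversible linear-algebraic manipulation.
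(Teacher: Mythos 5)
Your proposal is correct and follows essentially the same route as the paper: nondegeneracy and super skew-symmetry of $\mathfrak{B}$ via Lemma \ref{NondegForm}, the $\alpha$-invariance \eqref{alpha-symemetri} from $\alpha T_r=T_r(\alpha^{-1})^*$ (Lemma \ref{4.1}), and the $2$-cocycle identity \eqref{2-cocycle} by translating the super HJYBE into the $\mathcal{O}$-operator identity for $T_r$ relative to $(\mathcal{J}^*,ad^\star,(\alpha^{-1})^*)$ via Proposition \ref{sol} and then pairing with $\alpha(z)$ after setting $x=T_r(\eta_1)$, $y=T_r(\eta_2)$, $z=T_r(\eta_3)$. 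The only difference is that the paper carries out the sign-and-twist bookkeeping for the $2$-cocycle step explicitly, whereas you describe it at the level of a plan; your remark that invertibility of $T_r$ makes every step reversible is the same (largely implicit) justification the paper relies on for the converse.
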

\begin{proof}
By Lemma \ref{NondegForm}, $\mathfrak{B}$ is a nondegenerate super skew-symmetric bilinear since $r$ is nondegenerate super skew-symmetric.  By Proposition \ref{sol}, $T_r$ is an $\mathcal O$-operator, then we have $$\mathfrak{B}(\alpha(x),  \alpha(y))=\langle T_r^{-1}(\alpha(x)),\alpha(y)\rangle=\langle (\alpha^{-1})^*(T_r^{-1}(x)),\alpha(y)\rangle=\langle T_r^{-1}(x),y\rangle=\mathfrak{B}(x,y).$$
On the other hand, for any $x,y,z\in \mathcal{H}(\mathcal{J})$, there exist $\eta_1,\eta_2,\eta_3\in \mathcal{H}(\mathcal{J^{*}})$ such that $x=T_r(\eta_1),y=T_r(\eta_2),z=T_r(\eta_3)$, then  we have
\begin{align*}
    \mathfrak{B}(x\cdot y, \alpha(z))&=\langle T_r^{-1}(x\cdot y), \alpha(z)\rangle\\&=\langle T_r^{-1}(T_r(\eta_1)\cdot T_r(\eta_2)), \alpha(z)\rangle\\&=\langle T_r^{-1}(T_r(ad^\star(T_r(\eta_1))\eta_2+(-1)^{|\eta_1||\eta_2|}ad^\star(T_r(\eta_2)\eta_1)), \alpha(z)\rangle\\&=\langle ad^\star(T_r(\eta_1))\eta_2,\alpha(z)\rangle+(-1)^{|\eta_1||\eta_2|}\langle ad^\star(T_r(\eta_2)\eta_1),\alpha(z)\rangle
    \\&=\langle ad^\star(T_r(\eta_1))\eta_2,\alpha(T_r(\eta_3))\rangle+(-1)^{|\eta_1||\eta_2|}\langle ad^\star(T_r(\eta_2)\eta_1),\alpha(T_r(\eta_3))\rangle
    \\&=-(-1)^{|\eta_1||\eta_2|}\langle (\alpha^{-1})^{*}(\eta_2), ad(T_r(\eta_1))T_r(\eta_3)\rangle +\langle (\alpha^{-1})^{*}(\eta_1), ad(T_r(\eta_2))T_r(\eta_3)\rangle\\&=(-1)^{|\eta_1||\eta_2|+|\eta_1||\eta_3|}\langle (\alpha^{-1})^{*}(\eta_2), ad(T_r(\eta_3))T_r(\eta_1)\rangle +\langle (\alpha^{-1})^{*}(\eta_1), ad(T_r(\eta_2))T_r(\eta_3)\rangle\\&=(-1)^{|x|(|y|+|z|)}\langle (T_r^{-1}(\alpha(y)), z\cdot x\rangle +\langle T_r^{-1}(\alpha(x)), y\cdot z\rangle\\&=(-1)^{|x|(|y|+|z|)}\mathfrak{B} (\alpha(y), z\cdot x) +\mathfrak{B} (\alpha(x), y\cdot z).
\end{align*}
\end{proof}

\section{$\mathcal{O}$-operators of Hom-Jordan superalgebras and Hom-pre-Jordan superalgebras}\label{Sec4}
In this section, we introduce the notion of Hom-pre-Jordan superalgebras. Then we study the
relations among Hom-Jordan superalgebras, Hom-pre-Jordan superalgebras, and Hom-dendriform superalgebras.
\begin{defn}
\label{hom prejordan}
A Hom-pre-Jordan superalgebra is a Hom-superalgebra  $(\mathcal{A}, \circ,\alpha)$  satisfying, for any $x,y,z,u \in \mathcal{H}(\mathcal{A})$, the following identities
\begin{align}\label{hom prejordan1}
& [\alpha(x) \cdot \alpha(y)]\circ[\alpha(z)\circ \alpha(u)]+(-1)^{|x|(|y|+|z|)} [\alpha(y) \cdot \alpha(z)]\circ[\alpha(x)\circ \alpha(u)]+(-1)^{|z|(|x|+|y|)}[\alpha(z) \cdot \alpha(x)]\circ[\alpha(y)\circ \alpha(u)] \nonumber \\
&= \alpha^2(x)\circ[(y \cdot z) \circ\alpha(u)]+(-1)^{|x|(|y|+|z|)} \alpha^2(y)\circ[(z \cdot x) \circ \alpha(u)]+(-1)^{|z|(|x|+|y|)}\alpha^2(z)\circ[(x \cdot y) \circ \alpha(u)],
\end{align}
\begin{align}&    \alpha^2(x)\circ[\alpha(y)\circ(z\circ u)]+(-1)^{|z|(|x|+|y|)+|x||y|}\alpha^2(z)\circ[\alpha(y)\circ(x\circ u)]+(-1)^{|z||y|}[(x\cdot z)\cdot \alpha(y)]\circ\alpha^2(u) \nonumber\\
\label{hom prejordan2}&=\alpha^2(x)\circ[(y \cdot z) \circ \alpha(u)]+(-1)^{|x|(|y|+|z|)} \alpha^2(y)\circ[(z \cdot x) \circ \alpha(u)]+(-1)^{|z|(|x|+|y|)}\alpha^2(z)\circ[(x \cdot y) \circ \alpha(u)],
\end{align}
where $x \cdot y= x \circ y+(-1)^{|x||y|} y \circ x$. 
In fact, \eqref{hom prejordan1} and \eqref{hom prejordan2} are equivalent to the following equations for any $x,y,z,u\in \mathcal{H}(\mathcal{A})$ respectively
\begin{align}\label{EqvHomPreJordan1}
 & (x,y,z,u)_\alpha^1+(-1)^{|x|(|y|+|z|)}(y,z,x,u)_\alpha^{1}+(-1)^{|z|(|x|+|y|)}(z,x,y,u)_\alpha^1   +(-1)^{|x||y|}(y,x,z,u)_\alpha^{1}\nonumber\\&+(-1)^{|y||z|}(x,z,y,u)_\alpha^{1}+(-1)^{|x||y|+|x||z|+|y||z|}(z,y,x,u)_\alpha^{1}=0,\\
&as_\alpha(\alpha(x),\alpha(y),z\circ u)-(-1)^{|y||z|}as_\alpha(x\circ z,\alpha(y),\alpha(u))+(-1)^{|x|(|y|+|z|)}(y, z,x,u)_\alpha^2 +(-1)^{|x||y|}(y,x,z,u)_\alpha^{2}\nonumber\\&\label{EqvHomPreJordan2}+(-1)^{|x||y|+|x||z|+|y||z|}as_\alpha(\alpha(z),\alpha(y),x\circ u)-(-1)^{|x||z|+|y||z|}as_\alpha(z\circ x,\alpha(y),\alpha(u))=0,
\end{align}where
 \begin{equation*}
  (x,y,z,u)_\alpha^{1}=[\alpha(x)\circ \alpha(y)]\circ[\alpha(z)\circ\alpha(u)]-\alpha^{2}(x)\circ[(y \circ z) \circ \alpha(u)],
\end{equation*} \begin{equation*}
  (x, y,z,u)_\alpha^{2}  =[\alpha(x) \circ \alpha(y)]\circ[\alpha(z)\circ  \alpha(u)]-[\alpha(x)\circ(y\circ z)]\circ \alpha^{2}(u).
\end{equation*}
\end{defn}
\begin{defn}\label{def:HomDendr}
A Hom-dendriform superalgebra is a quadruple $(\mathcal{A}, \prec,\succ, \alpha)$ consisting of $\mathbb{Z}_2$-graded vector space $\mathcal{A}$ on which the operations $\prec, \succ : \mathcal{A}\otimes \mathcal{A} \rightarrow \mathcal{A}$
and $\alpha: \mathcal{A} \rightarrow \mathcal{A}$
 are even linear maps
 satisfying the following equations for  $x, y, z$ in $\mathcal{A}$
\begin{eqnarray}
\label{HomDendriCondition3} \alpha(x)\succ(y\succ z)&=&(x\star y)\succ \alpha (z).\\
\label{HomDendriCondition1}  (x\prec y)\prec \alpha (z)&=& \alpha(x)\prec(y\star z),
 \\ \label{HomDendriCondition2} (x\succ y)\prec \alpha (z)&=&\alpha(x)\succ(y\prec z)
\end{eqnarray}
where
$x\star y=x\prec y+x\succ y$.
\end{defn}
Analogous to the connection between Hom-associative superalgebras and Hom-Jordan
superalgebras, Hom-dendriform superalgebras are closely related to Hom-pre-Jordan
superalgebras.
\begin{prop}\label{HomDendToPreJ}
 Let  $(\mathcal{A}, \prec,\succ, \alpha)$ be a Hom-dendriform superalgebra. Then the product
  \begin{equation*}
 x\circ y= x\succ y +(-1)^{|x||y|}y\prec x, \quad \forall x,y\in \mathcal{H}(\mathcal{A}),
  \end{equation*}
 defines a Hom-pre-Jordan superalgebra structure on $\mathcal{A}$.
\end{prop}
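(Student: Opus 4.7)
The plan is to verify the two defining identities of a Hom-pre-Jordan superalgebra directly: substitute $x\circ y = x\succ y+(-1)^{|x||y|}y\prec x$ into \eqref{hom prejordan1} and \eqref{hom prejordan2} (equivalently \eqref{EqvHomPreJordan1}--\eqref{EqvHomPreJordan2}), expand every bracket, and reduce term-by-term using the three axioms \eqref{HomDendriCondition3}--\eqref{HomDendriCondition2} of a Hom-dendriform superalgebra.

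First I would record the basic identities needed throughout. Writing $x\star y = x\prec y + x\succ y$, a short computation gives
\begin{equation*}
x\cdot y = x\circ y + (-1)^{|x||y|}y\circ x = x\star y + (-1)^{|x||y|}y\star x,
\end{equation*}
so that $x\cdot y$ is the super-symmetrization of $\star$. It is standard that $(\mathcal{A},\star,\alpha)$ is itself a Hom-associative superalgebra (this follows by summing the three dendriform axioms with appropriate arguments), a fact I will use freely when simplifying mixed expressions such as $(x\star y)\succ\alpha(z)$ or $\alpha(x)\prec(y\star z)$.

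Next, each of the terms appearing in \eqref{hom prejordan1} and \eqref{hom prejordan2} unfolds into four monomials in $\succ,\prec$: two from the outer $\circ$ and two from each inner $\circ$, with Koszul signs $(-1)^{|a||b|}$ produced by the swap hidden in the definition of $\circ$. For \eqref{hom prejordan1} I would exploit its super-cyclic symmetry in $(x,y,z)$: classify the resulting monomials by their underlying tree shape in $\{\succ,\prec\}$, then apply \eqref{HomDendriCondition3} and \eqref{HomDendriCondition1} to shift $\alpha$-twisted brackets so that the three cyclic contributions collapse against one another. For \eqref{hom prejordan2} I would first expand the associator terms $as_\alpha$, then apply the third dendriform axiom \eqref{HomDendriCondition2} to convert the mixed patterns $(a\succ b)\prec \alpha(c)$ into $\alpha(a)\succ(b\prec c)$; these are precisely the patterns that match the remaining contributions after the expansion.

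The main obstacle is not conceptual but combinatorial: keeping the Koszul signs correct when the transposition $\tau=(-1)^{|x||y|}$ hidden in $\circ$ interacts with several applications of the dendriform axioms, each of which may itself have to be combined with the Hom-associativity of $\star$. A practical tactic will be to introduce shorthand notation for each sign-labelled monomial, organise them in a table indexed by the tree shape and the permutation of $(x,y,z,u)$, and verify cancellation class by class. Once both \eqref{EqvHomPreJordan1} and \eqref{EqvHomPreJordan2} reduce to $0=0$, the proposition follows.
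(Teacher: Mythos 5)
Your proposal follows essentially the same route as the paper: expand each $\circ$ into $\succ$ and sign-twisted $\prec$, and reduce the resulting monomials term by term using the three Hom-dendriform axioms until both sides of \eqref{hom prejordan1} and \eqref{hom prejordan2} match. The auxiliary observation that $\star$ is Hom-associative is correct and a reasonable bookkeeping aid, but the core computation is the same direct verification carried out in the paper.
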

\begin{proof}
Let $x,y,z, u\in \mathcal{H}(\mathcal{A})$, set $$x\star y=x \prec y+ x \succ y$$ and $$x\cdot y=x\circ y+(-1)^{|x||y|} y \circ x=x \star y+(-1)^{|x||y|} y \star x .$$ We have:
\begin{align*}
&[\alpha(x) \cdot \alpha(y)]\circ[\alpha(z)\circ \alpha(u)]+(-1)^{|x|(|y|+|z|)} [\alpha(y) \cdot \alpha(z)]\circ[\alpha(x)\circ \alpha(u)]+(-1)^{|z|(|x|+|y|)}[\alpha(z) \cdot \alpha(x)]\circ[\alpha(y)\circ \alpha(u)] \\&=[\alpha(x) \cdot \alpha(y)]\succ[\alpha(z)\succ \alpha(u)]+ (-1)^{|x|(|y|+|z|)}[\alpha(y) \cdot \alpha(z)]\succ[\alpha(x)\succ \alpha(u)]\\&+ (-1)^{|z|(|x|+|y|)}[\alpha(z) \cdot \alpha(x)]\succ[\alpha(y)\succ \alpha(u)]+[\alpha(x) \cdot \alpha(y)]\succ(-1)^{|u||z|}[\alpha(u)\prec \alpha(z)]\\&+ (-1)^{|u|(|y|+|z|)}[\alpha(x)\circ \alpha(u)]\prec[\alpha(y)\star\alpha(z)]+ (-1)^{|z|(|x|+|u|)+|x|(|y|+|u|)}[\alpha(y) \circ\alpha(u)]\prec(-1)^{|z||x|}[\alpha(x)\star \alpha(z)]\\&+(-1)^{|x|(|y|+|z|)}[\alpha(y) \cdot \alpha(z)]\succ(-1)^{|x||u|}[\alpha(u)\prec \alpha(x)]+ (-1)^{(|x|+|y|)(|z|+|u|)}([\alpha(z) \circ \alpha(u)]\prec(-1)^{|x||y|}[\alpha(y)\star \alpha(x)])\\&+(-1)^{|z|(|x|+|u|)+|x|(|y|+|u|)}[\alpha(y)\circ\alpha(u)]\prec[\alpha(z)\star \alpha(x)]
+(-1)^{|z|(|x|+|y|)}[\alpha(z) \cdot \alpha(x)]\succ(-1)^{|y||u|}[\alpha(u)\prec\alpha(y)]\\&+(-1)^{|u|(|y|+|z|)}[\alpha(x) \circ \alpha(u)]\prec(-1)^{|y||z|}[\alpha(z)\star \alpha(y)]+ (-1)^{(|x|+|y|)(|z|+|u|)}[\alpha(z) \circ \alpha(u)]\prec[\alpha(x)\star \alpha(y)]\\
&=\alpha^2(x)\succ[(y \cdot z) \succ \alpha(u)]+(-1)^{|x|(|y|+|z|)} \alpha^2(y)\succ[(z \cdot x) \succ\alpha(u)]+(-1)^{|z|(|x|+|y|)}\alpha^2(z)\succ[(x \cdot y) \succ\alpha(u)]\\&+\alpha^2(x)\succ(-1)^{|u|(|y|+|z|)}[\alpha(u)\prec(y \star z) ]+ (-1)^{|u|(|z|+|x|)}\alpha^2(y)\succ(-1)^{(|x|+|z|)|u|}[ \alpha(u)\prec(-1)^{|z||x|}(x \star z)]\\&+(-1)^{|u||z|}[(x \cdot y) \circ \alpha(u)]\prec\alpha^2(z)
+(-1)^{|z|(|x|+|y|)}\alpha^2(z)\succ(-1)^{|u|(|y|+|x|)}[\alpha(u)\prec(-1)^{|x||y|}(y \star x) ]\\&+(-1)^{|x|(|y|+|z|)} \alpha^2(y)\succ(-1)^{(|x|+|z|)|u|}[\alpha(u)\prec(z \star x) ]+(-1)^{|x|(|y|+|z|+|u|)}[(y \cdot z) \circ \alpha(u)]\prec\alpha^2(x)\\&+(-1)^{|z|(|x|+|y|)}\alpha^2(z)\succ(-1)^{(|y|+|x|)|u|}[\alpha(u)\prec(x \star y) ]+ \alpha^2(x)\succ(-1)^{|u|(|y|+|z|)}[\alpha(u)\prec(-1)^{|y||z|}(z \star y) ]\\&+(-1)^{|y|(|z|+|u|)+|x||z|}[(z \cdot x) \circ \alpha(u)]\prec\alpha^2(y)\\
&=\alpha^2(x)\circ[(y \cdot z) \circ \alpha(u)]+(-1)^{|x|(|y|+|z|)} \alpha^2(y)\circ[(z \cdot x) \circ \alpha(u)]+(-1)^{|z|(|x|+|y|)}\alpha^2(z)\circ[(x \cdot y) \circ \alpha(u)].
\end{align*}
Then condition \eqref{hom prejordan1} is satisfied. Similarly, we can get \eqref{hom prejordan2}.
\end{proof}
\begin{re}\label{HomPreAlt}
In \cite{SamiOthmenSihemSergei}, the authors define a   Hom-pre-alternative superalgebra $(\mathcal{A}, \prec , \succ ,\alpha  )$  as a generalization of Hom-dendriform superalgebra such that the following identities holds
\begin{align}\label{Hom-preAlt1}
  &\mathfrak{ass}_m(x,y,z)  +(-1)^{|x||y|}\mathfrak{ass}_r(y,x,z)=0,\\
  \label{Hom-preAlt2}
  &\mathfrak{ass}_m(x,y,z)  +(-1)^{|z||y|}\mathfrak{ass}_l(x,z,y)=0,\\
  \label{Hom-preAlt3}
  &\mathfrak{ass}_l(x,y,z)  +(-1)^{|x||y|}\mathfrak{ass}_l(y,x,z)=0,\\
  \label{Hom-preAlt4}
  &\mathfrak{ass}_r(x,y,z)  +(-1)^{|z||y|}\mathfrak{ass}_r(x,z,y)=0,
\end{align}
where for all $x,y,z\in \mathcal{H}(\mathcal{A})$, and 
$x\bullet y=x\succ y+x\prec y$,
\begin{align*}
&\mathfrak{ass}_l(x,y,z)=(x\bullet y) \succ \alpha(z) -\alpha(x)\succ(y \succ z), \\&\mathfrak{ass}_m(x,y,z)=(x \succ y) \prec \alpha(z) - \alpha(x) \succ (y \prec z),\\
&\mathfrak{ass}_r(x,y,z)=
(x\prec y)\prec \alpha(z) - \alpha(x)\prec(y \bullet z).
\end{align*}
The above construction is true in Hom-pre-alternative superalgebra. 
\end{re}

\begin{prop}\label{Hom-pre-Jor-to-Hom-Jor}
 Let $(\mathcal{A},\circ,\alpha)$ be a Hom-pre-Jordan superalgebra. Then the
product
 \begin{equation}\label{compCondition}
 x\cdot y= x\circ y +(-1)^{|x||y|} y\circ x, \quad \forall x,y\in \mathcal{H}(\mathcal{A}),
  \end{equation}
  defines a Hom-Jordan superalgebra on $\mathcal{A}$, which is called the associated Hom-Jordan
superalgebra of $(\mathcal{A}, \circ,\alpha)$ and $(\mathcal{A}, \circ,\alpha)$ is called a compatible Hom-pre-Jordan superalgebra structure of the Hom-Jordan superalgebra $(\mathcal{A},\cdot,\alpha)$.
\end{prop}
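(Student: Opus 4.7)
The super-commutativity of $\cdot$ is immediate from its definition, and the multiplicativity $\alpha(x\cdot y)=\alpha(x)\cdot\alpha(y)$ follows from the multiplicativity of $\alpha$ with respect to $\circ$. So the only substantive task is to verify the Hom-Jordan super-identity \eqref{Hom-Jordan super-identity} for $\cdot$. My plan is to reduce it to the two equivalent forms \eqref{EqvHomPreJordan1} and \eqref{EqvHomPreJordan2} of the Hom-pre-Jordan superalgebra axioms.

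First I would fix homogeneous elements $x,y,z,u\in\mathcal{H}(\mathcal{A})$ and expand each associator $as_\alpha(x\cdot y,\alpha(z),\alpha(u))$ in terms of the product $\circ$ by using $x\cdot y=x\circ y+(-1)^{|x||y|}y\circ x$ and also $\alpha(z)\cdot\alpha(u)=\alpha(z)\circ\alpha(u)+(-1)^{|z||u|}\alpha(u)\circ\alpha(z)$. Each of the six resulting monomials has the shape of a left-product $[\alpha(a)\circ\alpha(b)]\circ[\alpha(c)\circ\alpha(d)]$ minus a right-nested product $\alpha^2(a)\circ[(b\circ c)\circ\alpha(d)]$ or $[\alpha(a)\circ(b\circ c)]\circ\alpha^2(d)$; that is, every monomial is built from the two four-variable brackets $(\cdot,\cdot,\cdot,\cdot)_\alpha^{1}$ and $(\cdot,\cdot,\cdot,\cdot)_\alpha^{2}$ together with the super-associators $as_\alpha$ already appearing in \eqref{EqvHomPreJordan2}.

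Next I would take the cyclic sum over $(x,y,z)$ weighted by $(-1)^{|u|(|x|+|z|)}$. The key observation is that the six terms coming from the expansion of one $\cdot$ on the left slot, together with the two terms coming from the expansion of the $\cdot$ in $\alpha(z)\cdot\alpha(u)$, are precisely the eighteen monomials appearing in \eqref{EqvHomPreJordan1} (three cyclic copies, each with its three transposed partners) and the six in \eqref{EqvHomPreJordan2}. A careful reorganization, using the cyclic invariance of the sum (so that each cyclic rotation of $(x,y,z)$ can be relabelled to line up with the prescribed order in the axioms) and super-commutativity of $\cdot$, will split the total expression into one $(x,y,z,u)$-cyclic sum that is killed by \eqref{EqvHomPreJordan1}, plus a leftover sum that is killed by \eqref{EqvHomPreJordan2}.

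The main obstacle, as usual in Jordan-type computations, is the bookkeeping of the Koszul signs under cyclic permutation: the weight $(-1)^{|u|(|x|+|z|)}$ must be tracked through each permutation and combined with the signs introduced when swapping factors of $x\circ y$ versus $y\circ x$ and when pushing $\alpha(z)$ past $\alpha(u)$. To control this, I would organize the terms into two groups, those where $u$ remains the rightmost slot and those where it is brought inside via $\alpha(z)\cdot\alpha(u)$, and verify that after relabelling each group reproduces exactly one of the identities \eqref{EqvHomPreJordan1} or \eqref{EqvHomPreJordan2}. Once the sign accounting is done, the vanishing is a direct consequence of the Hom-pre-Jordan axioms, establishing that $(\mathcal{A},\cdot,\alpha)$ is a Hom-Jordan superalgebra.
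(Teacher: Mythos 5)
Your proposal is correct and follows essentially the same route as the paper: expand the Hom-Jordan super-identity for $\cdot$ into $\circ$-monomials and absorb the resulting terms into the two defining identities of a Hom-pre-Jordan superalgebra, the whole difficulty being the Koszul-sign bookkeeping. The paper organizes the computation slightly differently — it splits the cyclic sum into three pieces and verifies $l_1=r_1$ explicitly using \eqref{hom prejordan1}--\eqref{hom prejordan2} in their original form (with $l_2=r_2$, $l_3=r_3$ following by relabelling) rather than the equivalent associator forms \eqref{EqvHomPreJordan1}--\eqref{EqvHomPreJordan2} you invoke, but this is the same argument.
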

\begin{proof}
 Let $x,y,z,u\in \mathcal{H}(A)$. We can prove that
 \begin{align*}
& (-1)^{|z|(|x|+|u|)}((x \cdot y) \cdot \alpha(u)) \cdot \alpha^2(z)  + (-1)^{|x|(|y|+|u|)}((y \cdot z) \cdot \alpha(u)) \cdot \alpha^2(x) +(-1)^{|y|(|z|+|u|)}((z \cdot x) \cdot \alpha(u))\cdot\alpha^2(y) \nonumber\\
&= (-1)^{|z|(|x|+|u|)}[\alpha(x) \cdot \alpha(y)]\cdot [\alpha(u) \cdot \alpha(z)] +(-1)^{|x|(|y|+|u|)} [\alpha(y) \cdot \alpha(z)]\cdot [\alpha(u) \cdot \alpha(x)] \\&+(-1)^{|y|(|z|+|u|)} [\alpha(z )\cdot\alpha( x)]\cdot [\alpha(u) \cdot\alpha(y)]=0
 \end{align*}
 if and only if $l_1+l_2+l_3=r_1+r_2+r_3$ where
 \begin{align*}
  l_1 & =(-1)^{|z|(|x|+|u|)}[(x\cdot y)\cdot \alpha(u)]\circ\alpha^2(z)+(-1)^{|z||x|+|u|(|y|+|z|)} \alpha^2(x)\circ[\alpha(u)\circ(y\circ z)]\\&+(-1)^{|x|(|y|+|z|)+|u|(|z|+|x|)}\alpha^2(y)\circ[\alpha(u)\circ(x\circ z)] ,\\
  l_2 & =(-1)^{|y|(|z|+|u|)}[(z\cdot x)\cdot \alpha(u)]\circ\alpha^2(y)+(-1)^{|z|(|x|+|y|)+|u|(|y|+|z|)}\alpha^2(x)\circ[\alpha(u)\circ(z\circ y)]\\&+(-1)^{|z||y|+(|u|(|x|+|y|)}\alpha^2(z)\circ[\alpha(u)\circ(x\circ y)] ,\\
  l_3 & =(-1)^{(|x|(|y|+|u|)}[(y\cdot z)\cdot \alpha(u)]\circ\alpha^2(x)+(-1)^{|x||y|+|u|(|z|+|x|)} \alpha^2(y)\circ[\alpha(u)\circ(z\circ x)]\\&+(-1)^{|y|(|z|+|x|)+|u|(|x|+|y|)}\alpha^2(z)\circ[\alpha(u)\circ(y\circ x)] ,
\end{align*}
and
\begin{align*}
   r_1&=\sum_{x,y,u}(-1)^{|z|(|x|+|u|)}(\alpha(x)\cdot\alpha(y))\circ(\alpha(u)\circ\alpha(z)),  \\
   r_2&=\sum_{x,z,u}(-1)^{|y|(|z|+|u|)}(\alpha(z)\cdot\alpha(x))\circ(\alpha(u)\circ\alpha(y)),\\
   r_3&=\sum_{y,z,u}(-1)^{|x|(|y|+|u|)}(\alpha(y)\cdot\alpha(z))\circ(\alpha(u)\circ\alpha(x)).
\end{align*}
Note that
{\small\begin{eqnarray*}
 l_1&=&(-1)^{|z|(|x|+|u|)}[(x\cdot y)\cdot \alpha(u)]\circ\alpha^2(z)+(-1)^{|z||x|+|u|(|y|+|z|)} \alpha^2(x)\circ[\alpha(u)\circ(y\circ z)]\\&&+(-1)^{|x|(|y|+|z|)+|u|(|z|+|x|)}\alpha^2(y)\circ[\alpha(u)\circ(x\circ z)]\\&=&(-1)^{|z|(|x|+|u|)}[(x\circ y+(-1)^{|x||y|}y\circ x)\cdot \alpha(u)]\circ\alpha^2(z)+(-1)^{|z||x|+|u|(|y|+|z|)} \alpha^2(x)\circ[\alpha(u)\circ(y\circ z)]\\&&+(-1)^{|x|(|y|+|z|)+|u|(|z|+|x|)}\alpha^2(y)\circ[\alpha(u)\circ(x\circ z)]\\&=&(-1)^{|z|(|x|+|u|)}[(x\circ y+(-1)^{|x||y|}y\circ x)\circ \alpha(u)+(-1)^{|u|(|x|+|y|)}\alpha(u)\circ (x\circ y+(-1)^{|x||y|}y\circ x)]\circ\alpha^2(z)\\&&+(-1)^{|z||x|+|u|(|y|+|z|)} \alpha^2(x)\circ[\alpha(u)\circ(y\circ z)]+(-1)^{|x|(|y|+|z|)+|u|(|z|+|x|)}\alpha^2(y)\circ[\alpha(u)\circ(x\circ z)]\\&=&(-1)^{|z|(|x|+|u|)}[(x\circ y)\circ \alpha(u)]\circ\alpha^2(z)+(-1)^{|z|(|x|+|u|)+|x||y|}[(y\circ x)\circ \alpha(u)]\circ\alpha^2(z)\\&&+(-1)^{|z|(|x|+|u|)+|u|(|x|+|y|)}[\alpha(u)\circ (x\circ y)]\circ \alpha^2(z)+(-1)^{|z|(|x|+|u|)+|u|(|x|+|y|)+|x||y|}[\alpha(u)\circ (y\circ x)]\circ\alpha^2(z)\\&&+(-1)^{|z||x|+|u|(|y|+|z|)} \alpha^2(x)\circ[\alpha(u)\circ(y\circ z)]+(-1)^{|x|(|y|+|z|)+|u|(|z|+|x|)}\alpha^2(y)\circ[\alpha(u)\circ(x\circ z)]\\&\stackrel{\eqref{hom prejordan1}}{=}&(-1)^{|z|(|x|+|u|)}(\alpha(x)\circ \alpha(y))\circ(\alpha(u)\circ\alpha(z))+(-1)^{|z|(|x|+|u|)+|x||y|}(\alpha(y)\circ \alpha(x))\circ (\alpha(u)\circ\alpha(z))\\&&+(-1)^{|z|(|x|+|u|)+|u|(|x|+|y|)}(\alpha(u)\circ \alpha(x))\circ(\alpha(y)\circ \alpha(z))+(-1)^{|z|(|x|+|u|)+|u|(|x|+|y|)+|x||y|}(\alpha(u)\circ \alpha(y))\circ(\alpha(x)\circ\alpha(z))\\&&+(-1)^{|z||x|+|u|(|y|+|z|)} (\alpha(x)\circ\alpha(u))\circ(\alpha(y)\circ\alpha(z))+(-1)^{|x|(|y|+|z|)+|u|(|z|+|x|)}(\alpha(y)\circ\alpha(u))\circ(\alpha(x)\circ \alpha(z))\\&=&(-1)^{|z|(|x|+|u|)}(\alpha(x)\cdot \alpha(y))\circ (\alpha(u)\circ\alpha(z))+(-1)^{|z||x|+|u|(|y|+|z|)} (\alpha(x)\cdot\alpha(u))\circ(\alpha(y)\circ\alpha(z))\\&&+(-1)^{|x|(|y|+|z|)+|u|(|z|+|x|)}(\alpha(y)\cdot\alpha(u))\circ(\alpha(x)\circ \alpha(z)))\\&=&\sum_{x,y,u}(-1)^{|z|(|x|+|u|)}(\alpha(x)\cdot\alpha(y))\circ(\alpha(u)\circ\alpha(z))=r_1.
\end{eqnarray*}}
Similarly, we have  $l_2=r_2$, $l_3=r_3$.
\end{proof}
By the definition of a Hom-Jordan superalgebras and $\mathcal{J}$-module, we immediately obtain the following result.
\begin{prop}\label{rep pre}
A triple $(\mathcal{J},\circ, \alpha )$ is a Hom-pre-Jordan superalgebra if and only
if $(\mathcal{J},\cdot, \alpha )$ where, $"\cdot"$ is define by \eqref{compCondition} is a Hom-Jordan superalgebra and $(\mathcal{J},L , \alpha )$ is a $\mathcal{J}$-module of $(\mathcal{J},\cdot, \alpha )$, where $L$ denotes the left multiplication operator on $\mathcal{J}$ define by $$L_{x}y=x\circ y,\quad \forall x,y\in \mathcal{H}(\mathcal{J}).$$
\end{prop}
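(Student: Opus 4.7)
The plan is to prove the two implications by directly matching the defining axioms of a Hom-pre-Jordan superalgebra with the combination of the Hom-Jordan super-identity for $\cdot$ and the module axioms for $L$. Since the operation $\cdot$ is built from $\circ$ via \eqref{compCondition} and $L_x(y)=x\circ y$, every instance of $\pi$ and of $\cdot$ in the module axioms can be rewritten as a polynomial expression in $\circ$, and matched termwise with \eqref{hom prejordan1} and \eqref{hom prejordan2}.

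For the forward direction, assume $(\mathcal{J},\circ,\alpha)$ is a Hom-pre-Jordan superalgebra. By Proposition \ref{Hom-pre-Jor-to-Hom-Jor}, $(\mathcal{J},\cdot,\alpha)$ is automatically a Hom-Jordan superalgebra, so only the three module conditions for $L$ need to be verified. Condition \eqref{Hom-jordan-representation1}, namely $\alpha L_x = L_{\alpha(x)}\alpha$, is equivalent to the multiplicativity identity $\alpha(x\circ y)=\alpha(x)\circ\alpha(y)$, which holds by assumption. Condition \eqref{Hom-jordan-representation2}, applied to a homogeneous element $u\in\mathcal{H}(\mathcal{J})$ and with $\pi=L$, $\beta=\alpha$, reads
\[
\sum_{x,y,z}(-1)^{|x||z|}(\alpha(x)\cdot\alpha(y))\circ(\alpha(z)\circ\alpha(u))=\sum_{x,y,z}(-1)^{|x||z|}\alpha^{2}(x)\circ((y\cdot z)\circ\alpha(u)),
\]
which, after expanding the cyclic sum, is precisely the identity \eqref{hom prejordan1}. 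Condition \eqref{Hom-jordan-representation3} applied to $u$ translates in the same way into axiom \eqref{hom prejordan2}, after using the supercommutativity of $\cdot$ to recognize the term $(x\circ z)\cdot\alpha(y)$.

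For the backward direction, assume $(\mathcal{J},\cdot,\alpha)$ is a Hom-Jordan superalgebra and $(\mathcal{J},L,\alpha)$ is a $\mathcal{J}$-module. Reversing the translation above, the module axioms \eqref{Hom-jordan-representation2} and \eqref{Hom-jordan-representation3} evaluated on an arbitrary $u$ yield exactly \eqref{hom prejordan1} and \eqref{hom prejordan2}, showing that $(\mathcal{J},\circ,\alpha)$ satisfies the Hom-pre-Jordan axioms. Multiplicativity of $\alpha$ with respect to $\circ$ follows from \eqref{Hom-jordan-representation1} combined with multiplicativity of $\alpha$ for $\cdot$.

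The main obstacle, and essentially the only non-trivial bookkeeping in the argument, is verifying that the cyclic sums on the two sides of \eqref{Hom-jordan-representation2} and \eqref{Hom-jordan-representation3} carry the same Koszul signs $(-1)^{|x||z|}$, $(-1)^{|x||y|}$, $(-1)^{|y||z|}$ as those displayed in \eqref{hom prejordan1} and \eqref{hom prejordan2}; the remaining equivalences are mechanical substitutions $L_x(y)=x\circ y$ and $x\cdot y=x\circ y+(-1)^{|x||y|}y\circ x$. Once this sign matching is done, both directions are immediate.
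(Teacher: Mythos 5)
Your proposal is correct and takes essentially the same route as the paper: invoke Proposition \ref{Hom-pre-Jor-to-Hom-Jor} for the Hom-Jordan structure, identify $\alpha L_x=L_{\alpha(x)}\alpha$ with multiplicativity of $\circ$, and observe that with $\pi=L$, $\beta=\alpha$ the module identities \eqref{Hom-jordan-representation2} and \eqref{Hom-jordan-representation3} evaluated on $u$ become (up to an overall Koszul factor $(-1)^{|x||z|}$) the system \eqref{hom prejordan1}--\eqref{hom prejordan2}. The only point worth making explicit, which you gloss over slightly, is that \eqref{Hom-jordan-representation3} matches \eqref{hom prejordan2} only modulo \eqref{hom prejordan1} (they share one side each with it), but since the two pairs of identities are equivalent as systems this does not affect the argument, and the paper is no more explicit on this point or on the converse direction.
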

\begin{proof}
By Proposition \ref{Hom-pre-Jor-to-Hom-Jor}, $(\mathcal{J},\cdot, \alpha )$ is a Hom-Jordan superalgebra.\\
For any $x\in \mathcal{H}(\mathcal{J})$, we have $\alpha L_x(y)=\alpha(x\circ y)=\alpha(x)\circ\alpha(y)=L_{\alpha(x)}\alpha(y)$, which gives the identity \eqref{Hom-jordan-representation3}.\\
Let $x,y,z,t\in \mathcal{H}(\mathcal{J})$, we have
\begin{align*}
  &((-1)^{|x||z|} L_{\alpha(x)\circ \alpha(y)}L_{\alpha(z)}\alpha+(-1)^{|x||y|}L_{\alpha(y)\circ \alpha(z)}L_{\alpha(x)}\alpha + (-1)^{|z||y|} L_{\alpha(z)\circ \alpha(x)}L_{(\alpha(y)}\alpha)(t)\\=& (-1)^{|x||z|}  (\alpha(x)\circ \alpha(y))\circ\alpha(z,t)+(-1)^{|x||y|}(\alpha(y)\circ \alpha(z))\circ(\alpha(x\circ t)+ (-1)^{|z||y|} L_{\alpha(z)\circ \alpha(x)}L_{(\alpha(y)}\alpha)(t)\\=&(-1)^{|x||z|}\alpha^{2}(x)\circ((y\circ z)\circ \alpha(t))+(-1)^{|x||y|}\alpha^{2}(y)\circ((z\circ x)\circ \alpha(t))+(-1)^{|z||y|}\alpha^{2}(z)\circ((x\circ y)\circ \alpha(t))\\=&(-1)^{|x||z|} L_{\alpha^{2}(x)}L_{(y\circ z)} \alpha+(-1)^{|x||y|}L_{\alpha^{2}(y)}L_{(z\circ x)}\alpha + (-1)^{|z||y|} L_{\alpha^{2}(z)}L_{(x\circ y)}\alpha)(t).
\end{align*}
Then, the identity \eqref{Hom-jordan-representation2} is satisfied. Similarly we can show that the identity \eqref{Hom-jordan-representation3} is satisfied. 
Therefore $(\mathcal{J},L , \alpha )$ is a $\mathcal{J}$-module of $(\mathcal{J},\cdot, \alpha )$.
\end{proof}
Furthermore, we can construct Hom-pre-Jordan superalgebras from  $\mathcal{O}$-operators
of Hom-Jordan superalgebras.
\begin{prop}\label{induced hompre}
 Let $(\mathcal{J},\cdot,\alpha)$ be a Hom-Jordan superalgebra and $(V,\pi, \beta )$ be a $\mathcal{J}$-module.
Let $T: V\to \mathcal{J}$ be an $\mathcal{O}$-operator of $\mathcal{J}$ associated to $(V,\pi, \beta )$. Then $(V,\star,\beta)$ is a
Hom-pre-Jordan superalgebra, where
\begin{align}
  u\star v=\pi(T(u))v, \quad \forall u,v\in \mathcal{H}(V).
\end{align}
Therefore, there exists an associated Hom-Jordan superaglebra structure on $V$ given by Eq.\eqref{compCondition}
and $T$ is a homomorphism of Hom-Jordan superalgebras. Furthermore, there is an
induced Hom-pre-Jordan superalgebra structure on $T(V)$ given by
\begin{align}
   T(u)\cdot T(v)=T(u \star v), \,\,\ \forall u,v\in\mathcal{H}(V).
\end{align}
\end{prop}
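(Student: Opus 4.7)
The plan is to verify the two defining identities \eqref{hom prejordan1} and \eqref{hom prejordan2} of a Hom-pre-Jordan superalgebra for $(V,\star,\beta)$ by transporting them, via $T$ and the $\mathcal{O}$-operator relation, into identities on $\mathcal{J}$ that are already guaranteed by the $\mathcal{J}$-module axioms \eqref{Hom-jordan-representation2} and \eqref{Hom-jordan-representation3}. First I would record three preliminary facts that will do most of the bookkeeping: (i) multiplicativity $\beta(u\star v)=\beta(u)\star\beta(v)$, which follows from $T\beta=\alpha T$ together with \eqref{Hom-jordan-representation1}; (ii) the induced product on $V$ satisfies $u\cdot v=\pi(T(u))v+(-1)^{|u||v|}\pi(T(v))u$, so by the defining relation of an $\mathcal{O}$-operator,
\[
T(u\cdot v)=T(u)\cdot T(v);
\]
and (iii) consequently $T(\beta(u)\cdot \beta(v))=\alpha(T(u))\cdot \alpha(T(v))$ and $T(\beta^{2}(u))=\alpha^{2}(T(u))$.

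Next I would expand the left-hand side of \eqref{hom prejordan1}, written for $(V,\star,\beta)$: each factor of the form $[\beta(u)\cdot\beta(v)]\star[\beta(w)\star\beta(u')]$ equals
\[
\pi\bigl(T(\beta(u))\cdot T(\beta(v))\bigr)\,\pi(T(\beta(w)))\,\beta(u')
=\pi\bigl(\alpha(T(u))\cdot \alpha(T(v))\bigr)\,\pi(\alpha(T(w)))\,\beta(u'),
\]
so the whole left-hand side is exactly the left-hand side of \eqref{Hom-jordan-representation2} evaluated at $x=T(u)$, $y=T(v)$, $z=T(w)$ and applied to $u'$. The right-hand side of \eqref{hom prejordan1} transforms identically, using $T(v\cdot w)=T(v)\cdot T(w)$ and $T(\beta^{2}(u))=\alpha^{2}(T(u))$, into the right-hand side of \eqref{Hom-jordan-representation2} applied to $u'$. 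Hence \eqref{hom prejordan1} follows from the module identity. The same strategy, now invoking \eqref{Hom-jordan-representation3} (and again using $T(u\cdot w)\cdot\alpha(T(v))=\alpha^{2}$ composition correctly), yields \eqref{hom prejordan2}; the only additional subtlety here is that the term $[(x\cdot z)\cdot\alpha(y)]\circ\alpha^{2}(u)$ in \eqref{hom prejordan2} is covered by the $\pi((x\circ z)\cdot\alpha(y))\beta^{2}$ piece on the right of \eqref{Hom-jordan-representation3}, which is exactly why that identity is the correct one to match.

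Once \eqref{hom prejordan1} and \eqref{hom prejordan2} are established, the associated Hom-Jordan superalgebra structure on $V$ is immediate from Proposition \ref{Hom-pre-Jor-to-Hom-Jor}, and $T$ is a homomorphism of Hom-Jordan superalgebras because
\[
T(u\cdot v)=T(u\star v)+(-1)^{|u||v|}T(v\star u)=T(u)\cdot T(v)
\]
by the $\mathcal{O}$-operator relation, together with $T\beta=\alpha T$. For the induced Hom-pre-Jordan structure on $T(V)$, I would define $T(u)\cdot T(v):=T(u\star v)$ and show well-definedness by proving that $u\star v$ depends only on $T(u)$ and $T(v)$ modulo $\ker T$; concretely, if $T(u)=T(u')$ then $\pi(T(u))v=\pi(T(u'))v$, and analogously in the second slot using super-commutativity of the associated product on $V$. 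Transporting the two axioms verified on $V$ through $T$ then gives them on $T(V)$.

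The main obstacle I anticipate is purely combinatorial: keeping the six cyclic-type terms with their Koszul signs aligned when matching the expanded $\star$-expressions on $V$ with the module axioms on $\mathcal{J}$. A clean way to control this is to fix the substitution $(x,y,z)=(T(u),T(v),T(w))$ once and for all, observe that every sign $(-1)^{|x||z|}$, $(-1)^{|x||y|}$, $(-1)^{|z||y|}$ appearing in \eqref{Hom-jordan-representation2}–\eqref{Hom-jordan-representation3} is exactly the sign appearing in the corresponding term of \eqref{hom prejordan1}–\eqref{hom prejordan2} because $|T(u)|=|u|$, $|T(v)|=|v|$, $|T(w)|=|w|$ ($T$ is even), and then check the remaining terms (those involving $u'$ or $\beta u'$) one by one.
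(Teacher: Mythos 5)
Your proposal follows essentially the same route as the paper's proof: set $x=T(u)$, $y=T(v)$, $z=T(w)$, use the $\mathcal{O}$-operator identity to rewrite each $\star$-expression as a composition of $\pi$'s applied to the last argument, and match the resulting cyclic sums against the module axioms \eqref{Hom-jordan-representation2} and \eqref{Hom-jordan-representation3}. The paper carries out exactly this computation and dismisses the remaining claims with ``the other conclusions follow immediately,'' so your explicit remarks on $T$ being a homomorphism and on well-definedness of the induced structure on $T(V)$ are, if anything, slightly more careful than the original.
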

\begin{proof}
Let $u,v,w,a\in \mathcal{H}(V)$ and put $x=T(u)$, $y=T(v)$, $z=T(w)$ and $u\bullet v=u\star v+(-1)^{|u||v|}v\star u$.\\
Hence we have
\begin{align*}
  (\beta(u)\bullet \beta(v))\star(\beta(w)\star\beta(a))
  &=(\pi(T(\beta(u)\beta(v)+(-1)^{|u||v|}\pi(T(\beta(v)))\beta(u)))\star(\pi(T(\beta(w)))\beta(a))\\
&=\pi(T(\pi(T(\beta(u))\beta(v)+(-1)^{|u||v|}\pi(T(\beta(v)))\beta(u))))\pi(T(\beta(w)))\beta(a)  \\
   &=\pi(T(\beta(u))\circ T(\beta(v)))\pi(T(\beta(w)))\beta(a)\\
   &=\pi(\alpha(x)\circ \alpha(y))\pi(\alpha(z))\beta(a),
\end{align*}
\begin{align*}
\beta^2(u)\star[(v\bullet w)\star \beta(a)]
&=\pi(T(\beta^2(u)))\pi(T(\pi(T(v))w +(-1)^{|w||v|}\pi(T(w))v )) \beta(a) \\
&=\pi(T(\beta^2(u)))\pi(T(v)\circ T(w)) \beta(a)\\
&=\pi(\alpha^2(x))\pi(y\circ z) \beta(a)
\end{align*}
and
\begin{align*}
  \beta^2(u)\star[\beta(v)\star(w\star a)] & =\pi(T(\beta^2(u)))\pi(T(\beta(v)))\pi(T(w)) a \\
    & =\pi(\alpha^2(x))\pi(\alpha(y))\pi(z) a.
\end{align*}
Then
\begin{align*}
   & (\beta(u)\bullet \beta(v))\star(\beta(w)\star\beta(a)) + (-1)^{(|v|+|w|)|u|}(\beta(v)\bullet \beta(w))\star(\beta(u)\star\beta(a))+(-1)^{|w|(|u|+|v|)}(\beta(w)\bullet \beta(u))\star(\beta(v)\star\beta(a))  \\
   & =(-1)^{|z|(|x|+|y|)}\pi(\alpha(x)\circ \alpha(y))\pi(\alpha(z))\beta(a)+(-1)^{|x|(|y|+|z|)}\pi(\alpha(y)\circ \alpha(z))\pi(\alpha(x))\beta(a)\\&+(-1)^{|z|(|x|+|y|)}\pi(\alpha(z)\circ \alpha(x))\pi(\alpha(y))\beta(a) \\
   & =\pi(\alpha^2(x))\pi(y\circ z) \beta(a)+(-1)^{|x|(|z|+|y|)}\pi(\alpha^2(y))\pi(z\circ x) \beta(a)+(-1)^{|z|(|x|+|y|)}\pi(\alpha^2(z))\pi(x\circ y) \beta(a)\\
   &=\beta^2(u)\star[(v\bullet w)\star \beta(a)]+(-1)^{|u|(|v|+|w|)}\beta^2(v)\star[(w\bullet u)\star\beta(a)]+(-1)^{|w|(|u|+|v|)}\beta^2(w)\star[(u\bullet v)\star \beta(a)],
\end{align*}
and
\begin{align*}
   &\beta^2(u)\star[\beta(v)\star(w\star a)]+(-1)^{|u|(|w|+|v|)+|w||v|}\beta^2(w)\star[\beta(v)\star(u\star a)]+ (-1)^{|w||v|}[(u\bullet w)\bullet \beta(v)]\star \beta^2(a) \\
   & =\pi(\alpha^2(x))\pi(\alpha(y))\pi(z) a+(-1)^{|x|(|y|+|z|)+|y||z|}\pi(\alpha^2(z))\pi(\alpha(y))\pi(x) a+(-1)^{|z||y|}\pi([(x\circ z)\circ \alpha(y)]) \beta^2(a) \\
   & =\pi(\alpha^2(x))\pi(y\circ z) \beta(a)+(-1)^{|z|(|x|+|y|)}\pi(\alpha^2(z))\pi(x\circ y) \beta(a)+(-1)^{|x|(|y|+|z|)}\pi(\alpha^2(y))\pi(z\circ x) \beta(a)\\
   &=\beta^2(u)\star[(v\bullet w)\star\beta(a)]+(-1)^{|x|(|y|+|z|)}\beta^2(v)\star[(w\bullet u)\star\beta(a)]+(-1)^{|z|(|x|+|y|)}\beta^2(w)\star[(u\bullet v)\star \beta(a)].
\end{align*}
Thus, $(V,\star,\beta)$ is a Hom-pre-Jordan superalgebra.The other conclusions follow immediately.
\end{proof}
\begin{cor}\label{RBHomJor}
Let $(\mathcal{J},\cdot,\alpha )$ be a Hom-Jordan superalgebra, and $R$ be a  Rota-Baxter operator of weight z\'{e}ros on $\mathcal{J}$. Then there is a Hom-pre-Jordan superalgebra structure given by
\begin{align*}
x \circ y = R(x) \cdot y, \quad \forall  x, y \in \mathcal{H}(\mathcal{J}).
\end{align*}
\end{cor}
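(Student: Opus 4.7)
The plan is to derive this as a direct specialization of Proposition \ref{induced hompre}, which constructs a Hom-pre-Jordan superalgebra from any $\mathcal{O}$-operator associated to a representation. The key observation is that a Rota-Baxter operator of weight zero on $(\mathcal{J},\cdot,\alpha)$ is, by definition, precisely an $\mathcal{O}$-operator of $\mathcal{J}$ associated to the adjoint representation $(\mathcal{J}, \mathrm{ad}, \alpha)$.

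First I would recall the adjoint representation: the even linear map $\mathrm{ad}:\mathcal{J}\to \mathrm{End}(\mathcal{J})$ given by $\mathrm{ad}(x)(y)=x\cdot y$ makes $(\mathcal{J},\mathrm{ad},\alpha)$ into a $\mathcal{J}$-module, as noted in the excerpt. Then I would note that the Rota-Baxter identity
\[
R(x)\cdot R(y) = R\bigl(R(x)\cdot y+x\cdot R(y)\bigr)
\]
is exactly the $\mathcal{O}$-operator condition \eqref{OoperatorsCond} with $T=R$, $V=\mathcal{J}$, $\pi=\mathrm{ad}$ and $\beta=\alpha$, since $\mathrm{ad}(R(x))y=R(x)\cdot y$ and the super-commutativity of $\cdot$ accounts for the sign in the symmetric term.

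Next I would apply Proposition \ref{induced hompre} to $T=R$, which yields a Hom-pre-Jordan superalgebra structure on $V=\mathcal{J}$ given by
\[
x\circ y=\pi(T(x))y=\mathrm{ad}(R(x))y=R(x)\cdot y,\qquad \forall x,y\in\mathcal{H}(\mathcal{J}),
\]
with twist map $\beta=\alpha$. This is precisely the product asserted in the statement.

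There is no real obstacle here; the corollary is essentially a translation of Proposition \ref{induced hompre} to the particular setting where the representation is the adjoint one. The only thing worth verifying explicitly is the matching of signs: the $\mathcal{O}$-operator identity reads $T(u)\cdot T(v)=T\bigl(\pi(T(u))v+(-1)^{|u||v|}\pi(T(v))u\bigr)$, and substituting $\pi=\mathrm{ad}$ gives $R(x)\cdot R(y)=R\bigl(R(x)\cdot y+(-1)^{|x||y|}R(y)\cdot x\bigr)$, which by super-commutativity of $\cdot$ coincides with the usual Rota-Baxter identity.
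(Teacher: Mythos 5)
Your proposal is correct and matches the paper's intended argument: the corollary is stated without a separate proof precisely because it is the specialization of Proposition \ref{induced hompre} to $T=R$, $V=\mathcal{J}$, $\pi=\mathrm{ad}$, $\beta=\alpha$, using the paper's own identification of Rota-Baxter operators of weight zero with $\mathcal{O}$-operators for the adjoint representation. Your sign check via super-commutativity, reconciling $(-1)^{|x||y|}R(y)\cdot x$ with $x\cdot R(y)$, is the only point needing care and you handle it correctly.
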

\begin{ex}\label{pre-Jor-from-RB}
Let us consider the Hom-Jordan superalgebra $(K_3,\cdot_\alpha,\alpha)$ given in Example \ref{ex-hom-jor}. Define the even linear map $R:K_3\to K_3$ with respect to the basis $\{e,x,y\}$ by
$$R(e)=2e,~~~~R(x)=\lambda y,~~~~R(y)=0,$$
where $\lambda\in\mathbb{K}$. Then, the even bilinear map $\circ:K_3\to K_3$ defined by 
$$e\circ e=R(e)\cdot_\alpha e=2e,\;\;e\circ x=R(e)\cdot_\alpha x=\frac{1}{c}x,\;\;e\circ y=R(e)\cdot_\alpha y=\frac{1}{c}y,$$
$$x\circ e=R(x)\cdot_{\alpha }e=\frac{\lambda}{2c}y,\;\;x\circ x=R(x)\cdot_{\alpha }x=\frac{-\lambda}{c^2}e,$$
defines a Hom-pre-Jordan superalgebra structure on $K_3$.
\end{ex}
Next we will give a sufficient and necessary conditions for the existence of a compatible Hom-pre-Jordan superalgebra structure on a Hom-Jordan superalgebra.
\begin{prop}\label{compatible}
Let $(\mathcal{J},\cdot,\alpha)$ be a Hom-Jordan superalgebra and $(V,\pi, \beta )$ be a $\mathcal{J}$-module. Then there exists a compatible Hom-pre-Jordan superalgebra structure on $\mathcal{J}$ if and only if, there exists an invertible
$\mathcal{O}$-operator $T:V\rightarrow \mathcal{J}$ associated to $(V,\pi, \beta )$. Furthermore, the compatible Hom-pre-Jordan superalgebra structure on $\mathcal{J}$ is given by
\begin{align}
  x\circ y=T(\pi(x))T^{-1}(y), \quad \forall x,y\in \mathcal{H}(\mathcal{J}).
\end{align}
\end{prop}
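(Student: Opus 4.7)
The plan is to establish both directions by leveraging the two preceding propositions. For the sufficiency direction $(\Leftarrow)$, I would start from an invertible $\mathcal{O}$-operator $T:V\to\mathcal{J}$ and apply Proposition \ref{induced hompre}, which gives the Hom-pre-Jordan superalgebra $(V,\star,\beta)$ defined by $u\star v=\pi(T(u))v$, together with the fact that $T$ is a homomorphism into the associated Hom-Jordan superalgebra. Invertibility of $T$ then lets me transport $\star$ to $\mathcal{J}$ by setting $x\circ y := T(T^{-1}(x)\star T^{-1}(y)) = T(\pi(x)T^{-1}(y))$, which matches the formula in the statement. Since $T$ is an isomorphism of the transported structures, $(\mathcal{J},\circ,\alpha)$ automatically satisfies the Hom-pre-Jordan identities \eqref{hom prejordan1}--\eqref{hom prejordan2}.

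To finish sufficiency, I would verify compatibility with the original product $\cdot$: setting $u=T^{-1}(x)$ and $v=T^{-1}(y)$, the $\mathcal{O}$-operator identity \eqref{OoperatorsCond} gives
\begin{align*}
x\circ y + (-1)^{|x||y|}y\circ x &= T\big(\pi(T(u))v + (-1)^{|u||v|}\pi(T(v))u\big) = T(u)\cdot T(v) = x\cdot y,
\end{align*}
which is exactly the compatibility relation \eqref{compCondition}. This step is essentially mechanical once the transport of structure is set up.

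For the necessity direction $(\Rightarrow)$, given a compatible Hom-pre-Jordan superalgebra structure $(\mathcal{J},\circ,\alpha)$, I would apply Proposition \ref{rep pre} to conclude that $(\mathcal{J},L,\alpha)$, with $L_x(y)=x\circ y$, is itself a representation of $(\mathcal{J},\cdot,\alpha)$. The identity map $T=\mathrm{id}_{\mathcal{J}}$ trivially satisfies $\alpha T = T\alpha$ and is invertible, and the $\mathcal{O}$-operator axiom collapses to $x\cdot y = L_x(y)+(-1)^{|x||y|}L_y(x) = x\circ y+(-1)^{|x||y|}y\circ x$, which is exactly the compatibility of $\circ$ with $\cdot$. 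Thus $\mathrm{id}_{\mathcal{J}}$ serves as the required invertible $\mathcal{O}$-operator (with respect to the canonical module $(\mathcal{J},L,\alpha)$ built from the pre-Jordan structure).

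The main subtlety I expect is bookkeeping: the interpretation of the equivalence must allow the module in the $(\Rightarrow)$ direction to be the induced one $(\mathcal{J},L,\alpha)$ rather than an arbitrary given $(V,\pi,\beta)$, and in the $(\Leftarrow)$ direction one should be careful that the formula $T(\pi(x)T^{-1}(y))$ really defines an even operation, which it does because $T$ is even, $\pi(x)$ has parity $|x|$ as an endomorphism, and $T^{-1}$ is even. No hard computation is required beyond the $\mathcal{O}$-operator identity and the transport-of-structure argument, since all deep content has been packaged into Propositions \ref{induced hompre} and \ref{rep pre}.
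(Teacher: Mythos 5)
Your proposal is correct and follows essentially the same route as the paper, which simply says ``apply Proposition \ref{induced hompre} for $T(V)=\mathcal{J}$'' in one direction and notes that the identity map is an $\mathcal{O}$-operator for the converse. You have merely filled in the details the paper leaves implicit, including the (correct) observation that the converse uses the module $(\mathcal{J},L,\alpha)$ from Proposition \ref{rep pre} rather than the originally given $(V,\pi,\beta)$.
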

\begin{proof}
This is a direct computation, we apply Proposition \ref{induced hompre} for $T(V)=\mathcal{J}$.
Conversely, the identity map is an $\mathcal{O}$-operator of $\mathcal{J}$.
\end{proof}

The following conclusion reveals the relationship between Hom-pre-Jordan superalgebras and the Hom-Jordan superalgebras.
\begin{prop}
Let $(\mathcal{J},\cdot,\alpha)$ be a Hom-Jordan superalgebra, and $\mathfrak{B}$ a nondegenerate super skew-symmetric bilinear form satisfying Eq.\eqref{2-cocycle}. Then, there exists a compatible Hom-pre-Jordan superalgebra structure
$"\circ"$ on $\mathcal{J}$ given by
\begin{align}
  \mathfrak{B}(x\circ y, \alpha(z))=-(-1)^{|x||y|}\mathfrak{B}(\alpha(y),x\cdot  z)\quad \forall x, y, z\in  \mathcal{H}(\mathcal{J}).
\end{align}
\end{prop}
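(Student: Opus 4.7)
The plan is to realize the formula as coming from an invertible $\mathcal{O}$-operator, and then invoke Proposition~\ref{compatible}. Since $\mathfrak{B}$ is nondegenerate, let $T:\mathcal{J}^*\to\mathcal{J}$ be the inverse of the musical isomorphism $x\mapsto\mathfrak{B}(x,-)$, so that $\langle T^{-1}(x),y\rangle=\mathfrak{B}(x,y)$ for all $x,y\in\mathcal{H}(\mathcal{J})$. By Lemma~\ref{NondegForm} this is an even invertible map and, because $\mathfrak{B}$ is super skew-symmetric, it is super skew-symmetric in the sense of that lemma. The $\alpha$-invariance \eqref{alpha-symemetri} of $\mathfrak{B}$ translates, using the Koszul sign convention for the canonical pairing, into the compatibility $\alpha T=T(\alpha^{-1})^*$.

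The key step is to show that $T$ is an $\mathcal{O}$-operator associated to the coadjoint representation $(\mathcal{J}^*,ad^\star,(\alpha^{-1})^*)$. To do this, for arbitrary $\xi,\eta\in\mathcal{H}(\mathcal{J}^*)$ I would pair
\[
T(\xi)\cdot T(\eta)-T\bigl(ad^\star(T(\xi))\eta+(-1)^{|\xi||\eta|}ad^\star(T(\eta))\xi\bigr)
\]
against an arbitrary $\alpha(z)$ via $\mathfrak{B}$. Setting $x=T(\xi)$, $y=T(\eta)$, the left-hand contribution becomes $\mathfrak{B}(x\cdot y,\alpha(z))$, while the right-hand terms, after unwinding the definition \eqref{def-dual-repr} of $ad^\star$ and using $\mathfrak{B}(\alpha(-),\alpha(-))=\mathfrak{B}(-,-)$, become scalar multiples of $\mathfrak{B}(\alpha(y),x\cdot z)$ and $\mathfrak{B}(\alpha(x),y\cdot z)$. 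The 2-cocycle identity \eqref{2-cocycle}, together with super skew-symmetry of $\mathfrak{B}$ and super-commutativity of $\cdot$, then forces the difference to vanish.

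Once $T$ is known to be an invertible $\mathcal{O}$-operator, Proposition~\ref{compatible} produces a compatible Hom-pre-Jordan structure on $\mathcal{J}$ given by $x\circ y=T(ad^\star(x)T^{-1}(y))$. It remains to identify this with the claimed formula: pairing $\mathfrak{B}(x\circ y,\alpha(z))=\langle T^{-1}(x\circ y),\alpha(z)\rangle=\langle ad^\star(x)T^{-1}(y),\alpha(z)\rangle$ and applying \eqref{dual-rep-pair} together with $\mathfrak{B}(\alpha(-),\alpha(-))=\mathfrak{B}(-,-)$ recovers, after the sign produced by super skew-symmetry of $T$ (equivalently, by the pairing flip \eqref{eq:2.7}), precisely
\[
\mathfrak{B}(x\circ y,\alpha(z))=-(-1)^{|x||y|}\mathfrak{B}(\alpha(y),x\cdot z).
\]

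The main obstacle is the sign bookkeeping in the middle step. The $\mathcal{O}$-operator identity mixes several sources of super signs: the Koszul rule from \eqref{e.q.1}, the $(-1)^{|x||\xi|}$ coming from $ad^*$ versus $ad^\star$, the transfer between $\langle u^*,v\rangle$ and $\langle v,u^*\rangle$, and the $\alpha$-twists appearing in $ad^\star$. The conceptual core is short — it is exactly the 2-cocycle identity \eqref{2-cocycle} — but making the correspondence transparent requires carefully writing $T(\xi)\cdot T(\eta)=T(ad^\star(T(\xi))\eta)+(-1)^{|\xi||\eta|}T(ad^\star(T(\eta))\xi)$ as a symmetrization of the two-argument quantity $\mathfrak{B}(\alpha(y),x\cdot z)$, and matching this against the symmetrized form of \eqref{2-cocycle} obtained by applying super-commutativity to $z\cdot x$. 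Alternatively one can bypass Proposition~\ref{compatible} and check the Hom-pre-Jordan axioms \eqref{EqvHomPreJordan1}--\eqref{EqvHomPreJordan2} directly from the defining formula, but this requires iterating \eqref{2-cocycle} several times and is considerably less conceptual.
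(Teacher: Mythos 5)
Your proposal follows essentially the same route as the paper: build the invertible map $T:\mathcal{J}^*\to\mathcal{J}$ from $\mathfrak{B}$ as in Lemma~\ref{NondegForm}, use Eq.~\eqref{2-cocycle} (paired against $\alpha(z)$ through $\mathfrak{B}$) to show $T$ is an invertible $\mathcal{O}$-operator for the coadjoint representation $(\mathcal{J}^*,ad^\star,(\alpha^{-1})^*)$, invoke Proposition~\ref{compatible} to get $x\circ y=T(ad^\star(x)T^{-1}(y))$, and then unwind the pairing to recover the stated formula. This matches the paper's argument step for step (the paper is merely terser about why $T$ is an $\mathcal{O}$-operator, and like you it quietly also uses the $\alpha$-invariance \eqref{alpha-symemetri} even though the statement only lists \eqref{2-cocycle}), so the approach is correct and not genuinely different.
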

\begin{proof}
   By using Eq.\eqref{quadratic} and Eq.\eqref{2-cocycle}, we obtain that $T$ is an invertible $\mathcal{O}$-operator associated to the coadjoint representation $(\mathcal{J^{*}},ad^\star,(\alpha^{-1})^{*})$. By Proposition \ref{compatible}, there exists a compatible Hom-pre-Jordan superalgebra structure on $\mathcal{J}$ given by $x\circ y=T(ad^\star(x)T^{-1}(y))$ for any $x,y,z\in \mathcal{H}(\mathcal{J})$. Also, there exist $\eta_1,\eta_2,\eta_3\in \mathcal{H}(\mathcal{J^{*}})$ such that $x=T(\eta_1),y=T(\eta_2),z=T(\eta_3)$ and we have
\begin{align*}
    \mathfrak{B}(x\circ y, \alpha(z))&=\mathfrak{B}(T(ad^\star(x)T^{-1}(y)),\alpha(z))\\&=\mathfrak{B}(T(ad^*(\alpha(x))(\alpha^{-2})^*T^{-1}(y)),\alpha(z))
    &
    \\&=\langle ad^*(\alpha T(\eta_1))(\alpha^{-2})^*\eta_2,\alpha(z)\rangle\\&=\langle ad^*(\alpha T(\eta_1))(\alpha^{-2})^*\eta_2,\alpha(T(\eta_3))\rangle\\&=\langle (\alpha^{-2})^*\eta_2,ad(\alpha T(\eta_1))(\alpha(T(\eta_3))) \rangle\\&=\langle (\alpha^{-2})^*T^{-1}(y),\alpha T(\eta_1)\cdot\alpha T(\eta_3) \rangle\\&=\langle (\alpha^{-1})^*T^{-1}(y), T(\eta_1)\cdot T(\eta_3) \rangle\\&=\langle T^{-1}(\alpha(y)), T(\eta_1)\cdot T(\eta_3) \rangle\\&=\mathfrak{B}(\alpha(y), x\cdot z) \rangle.
\end{align*}
This completes the proof.
\end{proof}
Summarizing the above study in this section, we have the following commutative diagram:

\begin{equation*}\label{diagramhommalcev}
    \begin{split}
\resizebox{14cm}{!}{\xymatrix{
\ar[rr] \mbox{\bf Hom-Dend superalg }\ar[d]_{\mbox{}}\ar[rr]^{\mbox{ \quad Prop \ref{HomDendToPreJ}\quad\quad}}_{\mbox{ \quad}}
                && \mbox{\bf  Hom-pre-Jor superalg  }\ar[d]_{\mbox{Prop \ref{Hom-pre-Jor-to-Hom-Jor}}}\\
\ar[rr] \mbox{\bf Hom-Ass superalg }\ar@<-1ex>[u]_{\mbox{  }}\ar[rr]^{\mbox{\quad\quad\quad\quad {\small SuperAntiCom}\quad\quad \quad \quad }}
                && \mbox{\bf Hom-Jor superalg  }\ar@<-1ex>[u]_{\mbox{Cor \ref{RBHomJor} }}}
}
 \end{split}\end{equation*}
\begin{cor}
 Let $(\mathcal A, \cdot, \alpha)$ be a Hom-alternative superalgebra and $R : \mathcal A \to \mathcal A$ be a Rota-Baxter
operator of weight $0$. Define the  multiplications $\succ$ and $\prec$ on $\mathcal A$ by $x \prec y = x \cdot R(y)$ and $x \succ y = R(x) \cdot y$, for all
$x,y\in \mathcal A$. Then $(\mathcal A, \succ, \prec, \alpha)$ is a Hom-pre-alternative
superalgebra.

Moreover, $R$ is Rota-Baxter operator of weight $0$ on the Hom-Jordan admissible superalgebra $(\mathcal A, \star, \alpha)$ and   $(\mathcal A, \circ, \alpha)$ be its  associated Hom-pre-Jordan superalgebra given in    Proposition \ref{Hom-pre-Jor-to-Hom-Jor}, where $$x \circ y = x \succ y + (-1)^{|x||y|}y, \prec x\quad \forall x,y\in \mathcal H(\mathcal A).$$ 
\end{cor}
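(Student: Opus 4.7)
The plan is to decouple the statement into two parts: (i) verifying the four super Hom-pre-alternative axioms \eqref{Hom-preAlt1}--\eqref{Hom-preAlt4} for $(\mathcal A,\succ,\prec,\alpha)$; and (ii) identifying the induced Hom-pre-Jordan product via Corollary~\ref{RBHomJor} applied to the Jordan-admissible structure.

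For (i), the crucial computational identity is the ``telescoped'' form of the Rota-Baxter relation:
\[
R(x)\cdot R(y) \;=\; R\bigl(R(x)\cdot y + x\cdot R(y)\bigr) \;=\; R(x\bullet y),
\]
where $x\bullet y=x\succ y+x\prec y=R(x)\cdot y+x\cdot R(y)$. Using this together with $R\alpha=\alpha R$ (which holds because $R$ is an $\mathcal O$-operator with respect to the adjoint representation, hence commutes with $\alpha$), I would verify by direct substitution that
\[
\mathfrak{ass}_l(x,y,z)=as_\alpha(R(x),R(y),z),\qquad \mathfrak{ass}_m(x,y,z)=as_\alpha(R(x),y,R(z)),\qquad \mathfrak{ass}_r(x,y,z)=as_\alpha(x,R(y),R(z)).
\]
Each of the identities \eqref{Hom-preAlt1}--\eqref{Hom-preAlt4} then reduces, after substituting these expressions, to either the left or the right super Hom-alternativity of $(\mathcal A,\cdot,\alpha)$ evaluated at an appropriate permutation of $R(x),R(y),z$ or $x,R(y),R(z)$; since $R$ is even, the Koszul signs $(-1)^{|R(x)||y|}$ and $(-1)^{|x||y|}$ coincide and no correction is needed.

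For (ii), I would first invoke Proposition~\ref{FromAltToJorMalcev} to obtain that $(\mathcal A,\star,\alpha)$ with $x\star y=x\cdot y+(-1)^{|x||y|}y\cdot x$ is a Hom-Jordan superalgebra. A short calculation using the Rota-Baxter identity for $\cdot$ then gives
\[
R(x)\star R(y)=R(x)\cdot R(y)+(-1)^{|x||y|}R(y)\cdot R(x)=R\bigl(R(x)\star y+x\star R(y)\bigr),
\]
so $R$ is Rota-Baxter of weight $0$ on $(\mathcal A,\star,\alpha)$. Applying Corollary~\ref{RBHomJor} to this Hom-Jordan superalgebra yields a compatible Hom-pre-Jordan product
\[
x\circ y \;=\; R(x)\star y \;=\; R(x)\cdot y+(-1)^{|x||y|}y\cdot R(x)\;=\; x\succ y+(-1)^{|x||y|}y\prec x,
\]
which is exactly the formula claimed.

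The main obstacle is the sign-bookkeeping in step (i): one must carefully track the Koszul signs $(-1)^{|x||y|},(-1)^{|y||z|},(-1)^{|x||z|}$ through the three $\mathfrak{ass}$-to-$as_\alpha$ identifications and confirm that they line up exactly with the signs appearing in the super Hom-alternativity axioms of $(\mathcal A,\cdot,\alpha)$. Beyond this, the proof is essentially mechanical, relying only on the Rota-Baxter relation, the commutation $R\alpha=\alpha R$, and the evenness of $R$.
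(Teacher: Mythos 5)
Your proposal is correct, and it follows exactly the route the paper intends: the paper gives no proof of this corollary at all, deferring the first assertion to an external reference (Prop.~4.8 of the cited work on Hom-pre-alternative superalgebras) and presenting the second as a combination of Proposition~\ref{FromAltToJorMalcev} and Corollary~\ref{RBHomJor}. Your reductions $\mathfrak{ass}_l(x,y,z)=as_\alpha(R(x),R(y),z)$, $\mathfrak{ass}_m(x,y,z)=as_\alpha(R(x),y,R(z))$, $\mathfrak{ass}_r(x,y,z)=as_\alpha(x,R(y),R(z))$ are the standard and correct way to supply the omitted verification, using only $R(x\bullet y)=R(x)\cdot R(y)$, $R\alpha=\alpha R$, and the evenness of $R$; each of \eqref{Hom-preAlt1}--\eqref{Hom-preAlt4} then becomes the left or right super-alternativity of $(\mathcal A,\cdot,\alpha)$ in the arguments indicated, and your computation for part (ii) matches the claimed formula for $\circ$. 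The only (trivial) point left implicit is that $\alpha$ remains multiplicative for $\succ$ and $\prec$, which follows from $\alpha R=R\alpha$ and the multiplicativity of $\alpha$ for $\cdot$.
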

A Hom-Jordan  superalgebras, Hom-alternative superalgebras, Hom-pre-Jordan superalgebras and
Hom-pre-alternative superalgebras are closely related as follows (in the sense of commutative diagram
of categories):

{
\begin{equation*}\label{diagramhommalcev}
    \begin{split}
\resizebox{14cm}{!}{\xymatrix{
\ar[rr] \mbox{\bf Hom-pre-Alt superalg }\ar[d]_{\mbox{{\small\big(\cite{SamiOthmenSihemSergei}, Thm 4.3\big)}}}\ar[rr]^{\mbox{ \quad Rem \ref{HomPreAlt}\quad\quad}}_{\mbox{ \quad}}
                && \mbox{\bf  Hom-pre-Jor superalg  }\ar[d]_{\mbox{Prop \ref{Hom-pre-Jor-to-Hom-Jor}}}\\
\ar[rr] \mbox{\bf Hom-alt superalg }\ar@<-1ex>[u]_{\mbox{{\small\big(\cite{SamiOthmenSihemSergei}, Prop 4.8\big) }}}\ar[rr]^{\mbox{\quad\quad\quad\quad Prop \ref{FromAltToJorMalcev}\quad\quad\quad\quad  }}
                && \mbox{\bf Hom-Jordan superalg . }\ar@<-1ex>[u]_{\mbox{Cor \ref{RBHomJor} }}}
}
 \end{split}\end{equation*}}


\end{document}